\documentclass[a4paper,fleqn]{cas-sc}

\usepackage[numbers]{natbib}

\usepackage{bm}
\usepackage{amssymb}
\usepackage{amsfonts}
\usepackage{stackrel}
\usepackage{setspace}
\usepackage{stackengine}
\usepackage{graphicx,placeins,float}
\usepackage{amsthm}
\usepackage{tabularx}
\usepackage{epstopdf}
\usepackage{subcaption}

\usepackage{lineno}

\numberwithin{equation}{section}

\newcommand{\norm}[1]{\left\lVert#1\right\rVert}

\newcommand{\grad}[1]{\textbf{grad}\,{#1}}

\newtheorem{remark}{Remark}

\newtheorem{proposition}{Proposition}

\begin{document}
\let\WriteBookmarks\relax
\def\floatpagepagefraction{1}
\def\textpagefraction{.001}
\shorttitle{Energy stable CGFEM framework on SBP form for INS equations}
\shortauthors{Mandal M. et~al.}

\title [mode = title]{A high order accurate and energy stable continuous Galerkin framework on summation-by-parts form for the incompressible Navier-Stokes equations}

\author[1]{Mrityunjoy Mandal}

\affiliation[1]{organization={InCFD Research Group, Department of Mechanical Engineering, University of Cape Town}}

\author[1]{Arnaud {G Malan}}
\cormark[1]
\ead{arnaud.malan@uct.ac.za}
\cortext[cor1]{Corresponding author}

\author[2]{Prince Nchupang}
\affiliation[2]{organization={Department of Mathematical Sciences, Applied Mathematics Division, Stellenbosch University}}

\author[3,4]{Jan Nordstr\"{o}m}
\affiliation[3]{organization={Department of Mathematics, Applied Mathematics, Link\"{o}ping University}}
\affiliation[4]{organization={Department of Mathematics and Applied Mathematics}, University of Johannesburg}



	%


\begin{abstract}
This paper presents a high-order accurate Continuous Galerkin Finite Element Method (CGFEM) for solving the initial boundary value problems governed by the Incompressible Navier–Stokes (INS) equations. We discretize the INS equations using the CGFEM approach in Summation-By-Parts (SBP) form. Lagrange polynomials of up to $4^{\textrm{th}}$ order are employed. The boundary conditions are imposed weakly using the Simultaneous Approximation Term (SAT) technique, which accommodates discontinuous boundary data without special treatment. The resulting SBP-SAT formulation guarantees an energy stable discretization. The efficiency of the proposed framework is demonstrated by solving a series of numerical tests. Initially, the Method of Manufactured Solutions (MMS) is employed to demonstrate $4^{\textrm{th}}$ order convergence. Subsequently, the $4^{\textrm{th}}$ order accurate scheme is applied to a classical benchmark problem featuring discontinuous boundary conditions: the lid-driven cavity flow over a wide range of Reynolds numbers. Accurate and oscillation-free solutions are achieved even in the vicinity of the discontinuous top corner boundaries. Lastly, a canonical backward-facing step flow problem is solved, where accuracy and efficiency are demonstrated.
\end{abstract}



\begin{keywords}
Incompressible Navier-Stokes equations, Summation-by-parts, Weak boundary conditions, Velocity-pressure formulation, High order accuracy,  Discontinuous boundary conditions
\end{keywords}

\maketitle

\section{Introduction}

The INS equations describe a wide range of viscous flow phenomena encountered in various engineering applications, including aircraft design \cite{jameson1998optimum,changfoot2019hybrid,raj1991computational,malan2002improved1}, automobile engineering \cite{lohner2021overnight,bayraktar2001experimental}, biomedical engineering \cite{zingaro2021hemodynamics,miraucourt2017blood}, and many more. Over the years, numerous numerical techniques have been developed to solve the INS equations in an efficient way. Among them, the CGFEM approach \cite{girault1979finite,gunzburger2012finite,gresho2000incompressible,brooks1982streamline,barragy1997stream,botella1998benchmark,gresho1991some,brezzi2012mixed} has gained increasing attention due to its potential to deliver high-order accurate solutions while maintaining computational efficiency. However, the CGFEM framework remains challenging in several aspects.

One primary concern in simulating incompressible flows lies in enforcing the divergence-free constraint on the velocity field. The commonly adopted primitive variable formulation, based on velocity and pressure - typically results in a saddle-point problem, which requires the use of compatible approximation function spaces for the velocity and pressure fields to satisfy the \emph{Ladyzhenskaya-Babu\v{s}ka-Brezzi} or the \emph{inf-sup} conditions \cite{girault1979finite,gunzburger2012finite,gresho2000incompressible,brezzi2012mixed} in order to ensure the uniqueness of the solution. This criteria gets relaxed for the Least-square-finite-element-method framework. However, it demands a $C^1$ continuous function space to approximate the velocity field. To alleviate the $C^1$ continuity, researchers \cite{jiang1992least,prabhakar2006spectral,prabhakar2007stress} have proposed the use of auxiliary fields. While this approach simplifies the implementation, it significantly increases the computational cost. Alternative strategies have also been explored, such as constructing divergence-conforming grids \cite{evans2013isogeometricSteady,evans2013isogeometricUnsteady,buffa2011isogeometric} that inherently satisfy the continuity equation. However, generating such grids is often complex and nontrivial. Another approach involves relaxing the strict incompressibility constraint by introducing a penalty parameter \cite{hughes1979finite,gunzburger1989iterated,hostos2021solving}, allowing the formulation to accommodate weakly incompressible flows. However, the accuracy of this method is sensitive to the choice of the penalty parameter, and can lead to ill-conditioned matrices and stiffness, if not properly chosen. 

Yet, another alternative is to use the stream function-vorticity-based formulation \cite{girault1979finite,gunzburger2012finite,gresho2000incompressible,barragy1997stream}, which inherently enforces the incompressibility constraint and fits naturally within the $C^0$-continuous finite element framework. However, properly identifying vorticity boundary conditions increases the complexity of implementation. To address this issue, researchers have considered single-field formulations, such as the stream function-based approach \cite{mandal2023weakly,fairag2003numerical,cayco1986finite,cayco1989analysis,gunzburger2012finite,gresho2000incompressible}. Unfortunately, this leads to a biharmonic-type formulation, which requires $C^1$-continuous basis functions. Constructing $C^1$-continuous basis functions is challenging, and researchers have employed elements such as the Argyris variant \cite{gunzburger2012finite}, the Morley triangle \cite{gunzburger2012finite}, the Bogner-Fox-Schmit element \cite{fairag2003numerical}, etc., to meet this requirement. 
In addition, stream function-vorticity and stream function-based approaches are generally applied to two-dimensional domains. Their extension to three-dimensional problems remains rare due to the complexity of implementing boundary conditions. 

Another challenge with CGFEM arises in the presence of discontinuous boundary conditions,  when imposed strongly. Discontinuities in the prescribed boundary data can induce spurious, nonphysical oscillations in the numerical solution, leading to the Gibb's phenomena \cite{cruchaga1997finite,lestandi2018multiple,suman2019grid}. A well-known example is the lid-driven cavity flow problem, where the top boundary is assigned a unit tangential velocity while the remaining boundaries are treated as wall boundaries with zero velocity. This setup introduces discontinuities at the top corners, where the velocity abruptly changes, making the problem difficult to handle with traditional strong enforcement techniques. To mitigate this issue, researchers have proposed smoothing the tangential velocity profile near the corners - commonly using regularization strategies such as hyperbolic tangent functions \cite{prabhakar2006spectral} or quartic polynomial distributions \cite{shen1990numerical} - while setting the corner velocities to zero. Although these methods can alleviate numerical artifacts, they are inherently ad hoc and lack general applicability. A more systematic approach involves the weak imposition of boundary conditions. Popular techniques within the continuous Galerkin finite element framework include the use of Lagrange multipliers \cite{hansbo2005lagrange,burman2010fictitious}, the \emph{Nitsche}-type penalty method \cite{nitsche1971variationsprinzip,hansbo2002unfitted,hansbo2003nitsche,embar2010imposing} and fictitious domain methods \cite{glowinski1994fictitious,rank2011shell}. While the Lagrange multiplier method allows for consistent weak enforcement, it introduces an additional unknown field, significantly increasing computational cost. On the other hand, Nitsche’s method avoids this by incorporating the boundary condition in weak form using a penalty term. However, selecting an appropriate penalty parameter is nontrivial, and a poorly chosen value may lead to ill-conditioned matrices and stiffness.

 To address all limitations above, we employ the SBP-SAT technique \cite{nordstrom2019energy,malan2023sbp,nordstrom2017roadmap,nordstrom2024skew,nordstrom2024explicit} in the CGFEM framework and discretize the INS in skew-symmetric form~\cite{nordstrom2019energy}. To the best of our knowledge \cite{DELREYFERNANDEZ2014171,svard2014review}, the use of the SBP-SAT technique in the CGFEM framework for the nonlinear INS equation is new. The SBP-SAT technique includes a weak imposition of the boundary conditions. For the lid-driven cavity flow problem, this naturally leads to a weighted combination of the discontinuous boundary conditions and therefore requires no special treatment at the top corners. This formulation additionally employs a primitive variable vector which allows for equal-order Lagrange polynomial to be used. The chosen variables in combination with the SAT also remove the null spaces associated with the spatial operator and the related saddle point problem \cite{nordstrom2020spatial}. Stability is guaranteed by proving energy boundness. 

 The paper is organized as follows. The continuous problem and the associated energy analysis are described in Section~\ref{sec:MathematicalFormulations}. The appropriate choice of boundary conditions and its weak imposition technique using SAT are also presented here. Later, the semi-discrete approximation of the continuous problem in the CGFEM framework and its associated energy estimate are discussed in Section~\ref{Sec:Semidiscrete_CGFEM}. Here we also present the discrete energy estimate and prove stability in a similar manner as in the continuous setting. The time discretization using the second-order backward difference technique to obtain the fully discrete scheme is detailed in Section~\ref{Sec:Time_Discretization}. Section~\ref{Sec:Nuemrical_results} presents the numerical solutions for three different problems. First we use the MMS to verify the order of convergence of the formulation. Next, we solve the lid-driven cavity flow problem for a wide range of Reynolds numbers (Re). Finally, we solve the backward-facing step flow problem. Conclusions are presented in Section~\ref{sec:Concluding_remarks}. 

\section{The continuous problem }
\label{sec:MathematicalFormulations}
Let $\Omega\subset\mathbb{R}^{2}$ be the computational domain having boundary $\partial\Omega$. The governing equations for INS in terms of primitive variables in the absence of body force can be written as: 
\begin{equation}
\label{Eq:INS_Nform}
\begin{split}
\partial_{t}u + u\partial_{x}u + v\partial_{y}u + \partial_{x}p -\epsilon\left(\partial_{xx}^{2}+\partial_{yy}^{2}\right)u&=0, \\
\partial_{t}v + u\partial_{x}v + v\partial_{y}v + \partial_{y}p -\epsilon\left(\partial_{xx}^{2}+\partial_{yy}^{2}\right)v&=0, \\
\partial_{x}u + \partial_{y}v &= 0,\\
\end{split}
\end{equation}
where $(u,v)$ defines the velocity components in $(x,y)$ directions, respectively, $p$ the pressure field divided by the constant density, and $\epsilon=1/\mathrm{Re}$ represents the inverse of Reynolds number. The operators $\partial_{t}$, $\partial_{x}$, $\partial_{y}$, $\partial^{2}_{xx}$ and $\partial^{2}_{yy}$ denote temporal and spatial differentiations, respectively. 

To introduce the notations convenient for the upcoming energy analysis, we first define the relevant Sobolev spaces \cite{girault2012finite, brezzi2012mixed}: Let $L^2(\Omega)$ denote the Hilbert space of square-integrable functions on $\Omega$, equipped with the norm $\norm{u}_{L^2(\Omega)}^2=\int_{\Omega}u^{2}~d\Omega$. For an integer $m\geq 1$, the Sobolev space $H^m(\Omega)$  consists of the function along with it's derivatives upto order $m$ belong to $L^2(\Omega)$, i.e., $H^{m}(\Omega)=\{u\in L^2({\Omega}), \partial^{\alpha}u\in L^{2}(\Omega),\; |\alpha|\leq m  \}$, along with the following norm: $\norm{u}_{H^{m}(\Omega)}^{2}=\norm{u}_{L^2{(\Omega)}}^2 + \norm{\partial^{m}u}_{L^{2}(\Omega)}^{2}$. For two-dimensional vector-valued functions, we define $[H^{m}(\Omega)]^2=\{\bm{u}=(u,v)^{T}:u,v\in H^{m(\Omega)} \}$, with norm  $\norm{\bm{u}}_{[H^m{(\Omega)}]^2}^{2}= \norm{u}_{H^m{(\Omega)}}+\norm{v}_{H^m{(\Omega)}} $. 

We now rewrite (\ref{Eq:INS_Nform}) in matrix-vector form, with boundary and initial conditions, which leads to the initial-boundary-value problem~\cite{nordstrom2019energy}: find $\mathbf{w}\in\mathcal{U}$, where $\mathcal{U}:=\{\mathbf{w}\in [H^{2}(\Omega)]^{2}\times H^{1}(\Omega) | H\mathbf{w}=\mathbf{g}~ \text{on}~ \partial \Omega\}$ such that
\begin{align}
\label{Eq:INS_VM_form}
\tilde{I}\partial_{t}\mathbf{w} + \left(A\partial_{x} + B\partial_{y}\right)\mathbf{w} - \epsilon \tilde{I}\left(\partial^{2}_{xx}+\partial^{2}_{yy}\right)\mathbf{w} =\bm{0},& \quad \mathbf{x}\in\Omega,\; t> 0,\nonumber\\
H\mathbf{w} = \mathbf{g},& \quad \mathbf{x}\in\partial\Omega,\; t> 0,\\
\tilde{I}\mathbf{w}=\mathbf{f},& \quad \mathbf{x}\in\Omega,\; t=0,\nonumber
\end{align}
where 
\begin{align}
\label{Eq:MatVec_Op}
\mathbf{w} = \begin{pmatrix}  
u\\
v\\
p
\end{pmatrix},\quad \tilde{I} = \begin{pmatrix}  
1 & 0 & 0\\
0 & 1 & 0\\
0 & 0 & 0
\end{pmatrix},\quad A=\begin{pmatrix} 
u & 0 & 1\\
0 & u & 0\\
1 & 0 & 0
\end{pmatrix},\quad B=\begin{pmatrix} 
v & 0 & 0\\
0 & v & 1\\
0 & 1 & 0
\end{pmatrix}.
\end{align}

where $H^1(\Omega)$ is the closed sobolev subspace.

In (\ref{Eq:INS_VM_form}), $H$ is the boundary operator and the corresponding boundary data is defined by $\mathbf{g}$. The initial data is denoted by $\mathbf{f}$ and is imposed only on velocity components. We will determine the appropriate form of the boundary conditions, defined by $H\mathbf{w}=\mathbf{g}$ in the following energy analysis, to ensure an energy bound.
\subsection{Continuous energy analysis}
\label{Sec:ContEnergyAnalysis}
Following the approach outlined in \cite{nordstrom2017roadmap,nordstrom2019energy}, we formulate an energy-bounded continuous problem, which includes the derivation of boundary conditions. For linear initial boundary value problems, the energy method \cite{kreiss1970initial,straughan2013energy} generates an energy estimate in an $L_{2}$ equivalent norm. However, this technique requires certain symmetry properties of the matrices \cite{nordstrom2024skew}. To address this and simplify the derivation of the energy estimate, the nonlinear advective terms are expressed in skew-symmetric form \cite{nordstrom2019energy,nordstrom2022skew,nordstrom2024skew,nordstrom2006conservative}, assuming that both velocity components are differentiable. Next, we rewrite the momentum balance equation subject to the simplification due to incompressibility, $\partial_{x}A+\partial_{y}B=\left(\partial_{x}u+\partial_{y}v \right)\tilde{I}=0$ as follows :
\begin{equation}
\label{Eq:INS_conservative_NonConservativeForm}
\tilde{I}\partial_{t}\mathbf{w} +\mathcal{D}\left(\mathbf{w}\right)\mathbf{w}=0,
\end{equation}
where the spatial term $(\mathcal{D}(\mathbf{w})\mathbf{w})$ includes both advective and viscous terms, and is defined as
\begin{align}
\label{Eq:Spatial_operator_sbp_sat}
 &\mathcal{D}(\mathbf{w})\mathbf{w}=\Biggl[\frac{1}{2}\Bigl[A\partial_{x} + \partial_{x}A +B\partial_{y} + \partial_{y}B\Bigr] - \epsilon \tilde{I}\left(\partial^{2}_{xx}+\partial^{2}_{yy} \right)\Biggr]\mathbf{w}.
\end{align}

The energy method can now be applied by multiplying  (\ref{Eq:INS_conservative_NonConservativeForm}) with $\mathbf{w}^{T}$ and integrating over $\Omega$ to yield
\begin{align}
\label{Eq:ContEnergyINS_stp01}
\int_{\Omega}\mathbf{w}^{T}\tilde{I}\partial_{t}\mathbf{w}~d\Omega &+\frac{1}{2}\int_{\Omega}\Bigl[\mathbf{w}^{T}A\partial_{x}\mathbf{w} + \mathbf{w}^{T}\partial_{x}\left(A\mathbf{w}\right) + \mathbf{w}^{T}B\partial_{y}\mathbf{w} + \mathbf{w}^{T}\partial_{y}\left(B\mathbf{w} \right) \Bigr]~d\Omega \nonumber\\
&-\epsilon\tilde{I}\int_{\Omega}\left(\mathbf{w}^{T}\partial^{2}_{xx}\mathbf{w}+\mathbf{w}^{T}\partial^{2}_{yy}\mathbf{w} \right)~d\Omega=0.
\end{align}

By applying integration-by-parts, (\ref{Eq:ContEnergyINS_stp01}) is simplified to:
\begin{align}
\label{Eq:ContEnergyINS_step02}
\frac{d}{dt}\norm{\mathbf{w}}^{2}_{\tilde{I}} + 2\epsilon\norm{\grad{\mathbf{w}}}^{2}_{\tilde{I}}=\oint_{\partial\Omega}\textrm{BT}~ds,
\end{align}
where $\norm{\mathbf{w}}^{2}_{\tilde{I}}=\int_{\Omega}\mathbf{w}^{T}\tilde{I}\mathbf{w}~d\Omega$ and $\norm{\grad{\mathbf{w}}}^{2}_{\tilde{I}}=\int_{\Omega}\left((\grad{u})^{T}\grad{u}+(\grad{v})^{T}\grad{v}\right)d\Omega\geq 0$, where $\grad{}=(\partial_{x},\partial_{y})^{T}$. The length of the boundary segment in the surface integral is denoted by $ds=\sqrt{dx^2 + dy^2}$. The boundary term in the surface integral on the right-hand side of (\ref{Eq:ContEnergyINS_step02}) is:
\begin{align} 
\label{Eq:INS_boundaryIntegral}
\textrm{BT} = -u_{n}\left(u_{n}^{2}+u_{t}^{2}+2p  \right)+2\epsilon\left(u_{n}\partial_{n}u_{n}+u_{t}\partial_{n}u_{t}  \right).
\end{align}
The outward unit vector normal to the boundary $\partial\Omega$ is $\mathbf{n}=(n_x,n_y)^{T}$ and the unit tangent vector along the boundary is $\mathbf{t}=(-n_y, n_x )^{T}$. In (\ref{Eq:INS_boundaryIntegral}), the normal and tangential velocity components at the boundaries are represented by $u_{n}=un_{x}+vn_{y}$ and $u_{t}=-un_{y}+vn_{x}$, respectively. Moreover, the normal gradient at the boundary is given by $\partial_{n}=\mathbf{n}\cdot\grad{}$.

Eq.~(\ref{Eq:ContEnergyINS_step02}) describes that the only potential source of energy growth is due to the boundary term $\textrm{BT}$. To ensure stability, $\textrm{BT}$ must be bounded. The detailed analysis of appropriate selections of boundary conditions, including the solid wall cases, is readily available in \cite{nordstrom2019energy}. Extension to general non-homogeneous boundary conditions is given in \cite{nordstrom2024nonlinear,nordstrom2025linear,nordstrom2025open}. The homogeneous boundary conditions lead to the following Proposition:
\begin{proposition}
\label{Proposition:NormBound}
If the  boundary data are homogeneous \cite{nordstrom2025linear} and the boundary term in (\ref{Eq:INS_boundaryIntegral}) satisfies $BT\leq 0$ on $\partial \Omega$, then the solution of (\ref{Eq:INS_conservative_NonConservativeForm}) satisfies the energy bound 
\begin{equation}
\label{Eq:EnergyEstimateStrongForm}
\left(\norm{\mathbf{w}}^{2}_{\tilde{I}}  \right)_{t=T} + 2\epsilon\int_{0}^{T}\norm{\grad{\mathbf{w}}}_{\tilde{I}}^
{2}dt\leq \norm{\mathbf{f}}_{\tilde{I}}^{2}.
\end{equation}
\end{proposition}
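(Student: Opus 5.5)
The plan is to start from the energy identity (\ref{Eq:ContEnergyINS_step02}), which the paper has already derived from (\ref{Eq:INS_conservative_NonConservativeForm}). We multiply by $\mathbf{w}^{T}$ and integrate over $\Omega$. The skew-symmetric splitting in (\ref{Eq:Spatial_operator_sbp_sat}) lets the advective and pressure–divergence contributions integrate by parts to pure boundary terms. Note that $\mathbf{w}^{T}A\partial_x\mathbf{w}+\mathbf{w}^{T}\partial_x(A\mathbf{w})=\partial_x(\mathbf{w}^{T}A\mathbf{w})$ because $A$ and $B$ are symmetric. The viscous term gives $-2\epsilon\norm{\grad{\mathbf{w}}}^{2}_{\tilde{I}}$ plus boundary flux. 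Collecting everything in normal/tangential components yields exactly $\textrm{BT}$ in (\ref{Eq:INS_boundaryIntegral}). I will take this identity as given, using the stated regularity $\mathbf{w}\in[H^{2}(\Omega)]^{2}\times H^{1}(\Omega)$ so that the traces and integrations by parts make sense.

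Next, I use the hypothesis. With homogeneous boundary data, the admissible boundary conditions $H\mathbf{w}=0$ are, by assumption, such that $\textrm{BT}\le 0$ pointwise on $\partial\Omega$. Integrating over $\partial\Omega$ then gives $\oint_{\partial\Omega}\textrm{BT}\,ds\le 0$. Hence
\begin{equation*}
\frac{d}{dt}\norm{\mathbf{w}}^{2}_{\tilde{I}} + 2\epsilon\norm{\grad{\mathbf{w}}}^{2}_{\tilde{I}}\le 0
\end{equation*}
for all $t>0$.

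Then I integrate this differential inequality in time over $[0,T]$, which gives
\begin{equation*}
\left(\norm{\mathbf{w}}^{2}_{\tilde{I}}\right)_{t=T}-\left(\norm{\mathbf{w}}^{2}_{\tilde{I}}\right)_{t=0}+2\epsilon\int_{0}^{T}\norm{\grad{\mathbf{w}}}^{2}_{\tilde{I}}\,dt\le 0 .
\end{equation*}
Finally, I identify the initial term. Since $\tilde{I}$ is a projection ($\tilde{I}^{2}=\tilde{I}$, symmetric), we have $\norm{\mathbf{w}}^{2}_{\tilde{I}}=\int_\Omega(\tilde{I}\mathbf{w})^{T}(\tilde{I}\mathbf{w})\,d\Omega$. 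The initial condition $\tilde{I}\mathbf{w}=\mathbf{f}$ at $t=0$ therefore gives $\left(\norm{\mathbf{w}}^{2}_{\tilde{I}}\right)_{t=0}=\norm{\mathbf{f}}^{2}_{\tilde{I}}$, with $\mathbf{f}$ having zero pressure component, or at least its pressure component being invisible to the $\tilde{I}$-seminorm. This yields (\ref{Eq:EnergyEstimateStrongForm}).

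The only delicate point is the justification of the energy identity, specifically the step where the skew-symmetric form replaces the original nonconservative form. That step uses $\partial_xA+\partial_yB=(\partial_xu+\partial_yv)\tilde{I}=0$, so it relies on the incompressibility constraint holding for the solution. Since the proposition concerns the solution of (\ref{Eq:INS_conservative_NonConservativeForm}) itself, where this form is already adopted, I expect no real obstacle beyond ensuring enough time regularity to apply the fundamental theorem of calculus to $\norm{\mathbf{w}}^{2}_{\tilde{I}}$.
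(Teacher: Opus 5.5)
Your proposal is correct and follows the paper's argument: the paper's proof consists only of integrating the energy identity (\ref{Eq:ContEnergyINS_step02}) in time from $0$ to $T$. You also spell out the two steps the paper leaves implicit. First, $\textrm{BT}\le 0$ turns the identity into an inequality. Second, $\tilde{I}\mathbf{w}(0)=\mathbf{f}$, with $\tilde{I}$ a symmetric projection, gives $\left(\norm{\mathbf{w}}^{2}_{\tilde{I}}\right)_{t=0}=\norm{\mathbf{f}}^{2}_{\tilde{I}}$.
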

\begin{proof}
Time integration of (\ref{Eq:ContEnergyINS_step02}) from 0 to $T$  proves the theorem.
\end{proof}


For the upcoming lid-driven cavity problem, we have limited ourselves to Dirichlet boundary conditions, where we impose $u_{n}=0$ and $u_{t}=1$ at the upper lid and $u_{n}=u_{t}=0$ at the other solid walls. We define the boundary operator $H$ (mentioned in (\ref{Eq:INS_VM_form})) as follows:
\begin{align}
\label{Eq:SolidWallBC_ContProb}
H\mathbf{w}=\begin{pmatrix}n_{x} & n_{y} & 0\\ -n_{y} & n_{x} & 0 \\ 0 & 0 & 0 \end{pmatrix}\begin{pmatrix}u \\ v\\ p  \end{pmatrix}=\begin{pmatrix}u_{n}\\ u_{t}\\0  \end{pmatrix}.
\end{align}
\subsection{Weak imposition of boundary conditions for the continuous problem}
\label{Sec:WeakImpositionBC}
In this section, we impose the boundary conditions defined in (\ref{Eq:INS_VM_form}) and  (\ref{Eq:SolidWallBC_ContProb}) in a weak sense using the SAT method. We thus add the SAT term \cite{nordstrom2017roadmap} to the initial boundary value problem (\ref{Eq:INS_VM_form}) as:
\begin{equation}
\begin{split}
\label{Eq:WeakImposition_INS_SAT}
&\tilde{I}\partial_{t}\mathbf{w}+\mathcal{D}(\mathbf{w})\mathbf{w}=\textrm{SAT}\; \quad \textrm{where}\quad \textrm{SAT}=L\left(\Sigma\left(H\mathbf{w}-\mathbf{g} \right)  \right),\\
&\tilde{I}\mathbf{w}\left(0 \right)=\mathbf{f}.
\end{split}
\end{equation}
Here the spatial operator $\mathcal{D}(\mathbf{w})$ is defined in (\ref{Eq:Spatial_operator_sbp_sat}). The SAT term contains a lifting operator $L\left(\cdot \right)$ \cite{arnold2002unified} which is defined as $\int_{\Omega}\psi^{T}L\left(\phi \right)d\Omega=\oint_{\partial\Omega}\psi^{T}\phi~ds$ for smooth vectors $\psi$ and $\phi$. 

We now apply the energy method (multiply with the solution vector and integrate over the domain) to (\ref{Eq:WeakImposition_INS_SAT}) and obtain
\begin{align}
\label{Eq:WkBCEnergyEtimate}
\frac{d}{dt}\norm{\mathbf{w}}^{2}_{\tilde{I}} + 2\epsilon\norm{\grad{\mathbf{w}}}^{2}_{\tilde{I}}=\oint_{\partial\Omega}\textrm{BC}~ds \;\; \textrm{where}\quad \textrm{BC}=\textrm{BT} + 2\mathbf{w}^{T}\left(\Sigma\left(H\mathbf{w}-\mathbf{g} \right)\right).
\end{align}
The boundary term $\textrm{BT}$ remains identical to (\ref{Eq:INS_boundaryIntegral}). We aim to get an energy bound such that the right-hand-side of (\ref{Eq:WkBCEnergyEtimate}) becomes non-positive. For this reason, we now construct the penalty matrix $\Sigma$ such that it eliminates the indefinite boundary contribution, and for simplicity we use $\mathbf{g}=\mathbf{0}$.

\begin{proposition}
An energy bound for \eqref{Eq:WeakImposition_INS_SAT} is obtained by selecting the penalty matrix $\Sigma = \Sigma_1 + \Sigma_2$, where $\Sigma_1 = R^{T}\Sigma_1'$ and $\Sigma_2 = R^{T}\Sigma_2'$, with
\[
R = \begin{pmatrix} 
n_x & n_y & 0 \\
- n_y & n_x & 0 \\
0 & 0 & 1 
\end{pmatrix}, \quad
\Sigma_1' = \begin{pmatrix}
\dfrac{u_n}{2} & 0 & 0 \\
0 & \dfrac{u_n}{2} & 0 \\
1 & 0 & 0
\end{pmatrix},\quad and \quad
\Sigma_2' = \begin{pmatrix}
- \partial_n^{T} & 0 & 0 \\
0 & - \partial_n^{T} & 0 \\
0 & 0 & 0
\end{pmatrix}.
\]
\end{proposition}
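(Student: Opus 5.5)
The plan is to insert the proposed $\Sigma$ directly into the boundary integrand $\textrm{BC}$ of (\ref{Eq:WkBCEnergyEtimate}) with $\mathbf{g}=\mathbf{0}$, and show that $\textrm{BC}$ vanishes identically on $\partial\Omega$. With $\textrm{BC}=0$, the right-hand side of (\ref{Eq:WkBCEnergyEtimate}) is zero, and integrating in time from $0$ to $T$ gives the same estimate (\ref{Eq:EnergyEstimateStrongForm}) as in Proposition~\ref{Proposition:NormBound}.

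The first step is to rewrite $\mathbf{w}^{T}\Sigma H\mathbf{w}$ in rotated variables. Since $\Sigma=R^{T}(\Sigma_1'+\Sigma_2')$, we have $\mathbf{w}^{T}\Sigma=(R\mathbf{w})^{T}(\Sigma_1'+\Sigma_2')$, where $R\mathbf{w}=(u_n,u_t,p)^{T}$. By (\ref{Eq:SolidWallBC_ContProb}), $H\mathbf{w}=(u_n,u_t,0)^{T}$. The penalty term therefore becomes a scalar quadratic form in $(u_n,u_t,p)$, and the two parts of $\Sigma$ can be handled separately.

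For $\Sigma_1'$, a direct multiplication gives $\Sigma_1'H\mathbf{w}=(u_n^2/2,\;u_nu_t/2,\;u_n)^{T}$. Contracting with $(u_n,u_t,p)$ and multiplying by two yields
\begin{align*}
2(R\mathbf{w})^{T}\Sigma_1'H\mathbf{w}=u_n\left(u_n^{2}+u_t^{2}+2p\right),
\end{align*}
which cancels exactly the inviscid part $-u_n(u_n^2+u_t^2+2p)$ of $\textrm{BT}$ in (\ref{Eq:INS_boundaryIntegral}). The entry $1$ in position $(3,1)$ of $\Sigma_1'$ is what removes the indefinite pressure term $-2u_np$. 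For $\Sigma_2'$, the operator $-\partial_n^{T}$ is to be read as the normal derivative acting on the test (left) factor. This yields $-2(u_n\partial_nu_n+u_t\partial_nu_t)$, which cancels the viscous part $2\epsilon(u_n\partial_nu_n+u_t\partial_nu_t)$ of $\textrm{BT}$.

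I expect the viscous term to be the main obstacle. Matching the factor $\epsilon$ requires either absorbing $\epsilon$ into the definition of $\partial_n^{T}$ or scaling $\Sigma_2$ by $\epsilon$; I would state this scaling explicitly and carry it through. The same step also requires identifying $\partial_n$ of the rotated components with the rotation of $\partial_n(u,v)$. This identification holds on the straight, piecewise-constant-normal walls of the cavity. On curved boundaries it produces extra curvature terms, which I would either assume away or treat separately.

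Once both cancellations are established, $\textrm{BC}=0$ pointwise on $\partial\Omega$. Then (\ref{Eq:WkBCEnergyEtimate}) reduces to
\begin{align*}
\frac{d}{dt}\norm{\mathbf{w}}^{2}_{\tilde{I}}+2\epsilon\norm{\grad{\mathbf{w}}}^{2}_{\tilde{I}}=0,
\end{align*}
and time integration with $\tilde{I}\mathbf{w}(0)=\mathbf{f}$ gives the energy bound, which proves the proposition.
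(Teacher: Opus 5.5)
Your proposal follows the paper's proof essentially step for step: you pass to rotated variables via $R\mathbf{w}=(u_n,u_t,p)^{T}$, cancel the advective and viscous parts of $\textrm{BC}$ separately with $\Sigma_1'$ and $\Sigma_2'$ (reading $\partial_n^{T}$ as acting to the left), and integrate in time. Your remark about the factor $\epsilon$ is well taken: the paper's proof silently sets $\Sigma_2=R^{T}\epsilon\Sigma_2'$ rather than the $R^{T}\Sigma_2'$ written in the statement, which is exactly the scaling you propose to make explicit.
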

\begin{proof}
Let us start by splitting the penalty matrix $(\Sigma)$ into advection $(\Sigma_{1})$ and diffusion $(\Sigma_{2})$ matrices and rewriting $\textrm{BC}$ in (\ref{Eq:WkBCEnergyEtimate}) as follows:
\begin{align}
\label{Eq:BC_ESTIMATE_SOLIDWALL}
\textrm{BC}=\biggl[-u_{n}\left(u_{n}^{2}+u_{t}^{2}+2p \right)+ 2\mathbf{w}^{T}\Sigma_{1}H\mathbf{w}\biggr] + \biggl[2\epsilon\left(u_{n}\partial_{n}u_{n} + u_{t}\partial_{n}u_{t} \right) + 2\mathbf{w}^{T}\Sigma_{2}H\mathbf{w} \biggr].
\end{align}

The penalty terms $\Sigma_{1}$ and $\Sigma_{2}$ will be selected in such a way that the terms within the brackets are canceled. Let us now consider the first bracket with  $\Sigma_{1}=R^{T}\Sigma_{1}^{\prime}$ where $R\mathbf{w}=\tilde{\mathbf{w}}$ and $\tilde{\mathbf{w}}=(u_{n},u_{t},p)^{T}$ and obtain 
\begin{align}
\label{Eq:penaltyestimate_1stSquareBracket}
-u_{n}\left(u_{n}^{2}+u_{t}^{2}+2p \right) + 2\mathbf{w}^{T}\Sigma_{1}H\mathbf{w}=-u_{n}\left(u_{n}^{2}+u_{t}^{2}+2p \right)+ 2\tilde{\mathbf{w}}^{T}\Sigma_{1}^{\prime}H\mathbf{w}.
\end{align}
To eliminate all terms in the first bracket, we choose (recalling (\ref{Eq:SolidWallBC_ContProb}))
\begin{equation}
\label{Eq:Sigma1_cont}
\Sigma_{1}^{\prime}=\begin{pmatrix}\dfrac{u_n}{2} & 0&0\\ 0 & \dfrac{u_n}{2} & 0\\ 1 &0 & 0 \end{pmatrix}.
\end{equation}
                                                   
\noindent For the second bracket, we set $\Sigma_{2}=R^{T}\epsilon\Sigma_{2}^{\prime}$ and obtain
\begin{align}
\label{Eq:penaltyestimate_2ndSquareBracket}
2\epsilon\left(u_{n}\partial_{n}u_{n} + u_{t}\partial_{n}u_{t} \right) + 2\mathbf{w}^{T}\Sigma_{2}H\mathbf{w}=2\epsilon\left(u_{n}\partial_{n}u_{n} + u_{t}\partial_{n}u_{t} \right) + 2\tilde{\mathbf{w}}^{T}\epsilon\Sigma_{2}^{\prime}H\mathbf{w}.
\end{align}
All terms in (\ref{Eq:penaltyestimate_2ndSquareBracket}) are removed by setting
\begin{equation}
\label{Eq:Sigma2_cont}
\Sigma_{2}^{\prime}=\begin{pmatrix}-\partial_{n}^{T} & 0 &0 \\ 0 & -\partial_{n}^{T}& 0\\ 0 &0 &0 \end{pmatrix}.
\end{equation}
Here the non-conventional operator $\partial_{n}^{T}$ defines the transpose of the normal derivative operator and it acts to the left such that $u\partial_{n}^{T}=\partial_{n} u$. Now the right-hand-side of (\ref{Eq:WkBCEnergyEtimate}) vanishes and its time integration leads to energy boundness such that Proposition~\ref{Proposition:NormBound} holds. This completes the proof. 
\end{proof}
\section{Semi-discrete approximation in the CGFEM framework}
\label{Sec:Semidiscrete_CGFEM}
To solve the problem (\ref{Eq:WeakImposition_INS_SAT}) numerically in the CGFEM framework, we discretize the two-dimensional domain $\Omega\in \mathbb{R}^{2}$ into $n_{\textrm{el}}=M_{\textrm{el}}\times N_{\textrm{el}}$ finite elements such that $\Omega = \cup~\Omega^{e}$. Here, $M_{\textrm{el}}$ and $N_{\textrm{el}}$ denote the number of elements in the $x$- and $y$-directions respectively. The total number of global nodes becomes $M\times N$, where $M=\left(M_{\textrm{el}}\times k+1\right)$ and $N= \left(N_{\textrm{el}}\times k+1\right)$ represent the number of nodes in the corresponding directions and $k$ is the degree of the Lagrange basis used.

We begin by constructing the spatial operators in SBP form at the element level for a single scalar field $\phi$ in a one-dimensional domain, following the methodology described in~\cite{malan2023sbp}. These element-level operators are then assembled to form the corresponding global operators. The two-dimensional operators are subsequently constructed using tensor products of the one-dimensional operators. Finally, the formulation is extended to accommodate the vector field $\mathbf{w}^{T}=(u, v, p)$, making it compatible with the INS equations.
\subsection{Constructions of the SBP operators at the element level for one-dimensional domain}
\label{Sec:Construction}
 For a single element in a one-dimensional domain, we employ the Lagrange basis function $(\mathcal{L})$ to approximate the solution field. As such the trial function $\phi(x,t)$ is approximated by:
 \begin{equation}
\phi^{h}(x,t) = \sum_{i=0}^{k}\mathcal{L}_{i}(x)\Phi_{i}(t)=\bm{\mathcal{L}}^{T}(x)\bm{\Phi}(t),
\end{equation}
where the total number of nodes per element is $\left(k+1\right)$, and $\bm{\mathcal{L}}(x)^{T}=[\mathcal{L}_{0}(x),\mathcal{L}_{2}(x),\ldots,$ $\mathcal{L}_{k}(x)]$ is the set of Lagrange polynomials of degree $k$. $\bm{\Phi}(t)=[\Phi_{0}(t),\Phi_{1}(t),\ldots,\Phi_{k}(t)]$ denotes the set of time-dependent coefficients at node points. 
 
We compute the element mass matrix $(P^{e})$ and weak first derivative operator $(Q_{x}^{e})$ at the element level as (for details see \cite{malan2023sbp}): 
\begin{equation}
\label{Eq:SBP_OPS_1DM01}
P^{e} = \int_{\Omega^{e}}\bm{\mathcal{L}}\bm{\mathcal{L}}^{T}~d\Omega^{e},\quad Q_{x}^{e}=\int_{\Omega^{e}}\bm{\mathcal{L}}\left(\partial_{x}\bm{\mathcal{L}}\right)^{T}~d\Omega^{e},
\end{equation}
where $\int_{\Omega^{e}}(\cdot)~d\Omega^{e}$ defines the integral over the element domain. 
\begin{remark}
The operator $Q_{x}^{e}$ possesses an almost skew-symmetric property, the so-called SBP-property, since
\begin{equation}
\label{Eq:SBP_PropQx}
Q_{x}^{e} = \int_{\Omega^{e}}\bm{\mathcal{L}}\left(\partial_{x}\bm{\mathcal{L}}\right)^{T} d\Omega^{e}= \left. \bm{\mathcal{L}} \bm{\mathcal{L}}^T \right|_{0}^{k}
-\int_{\Omega^{e}}\left(\partial_{x}\mathcal{L}\right)\mathcal{L}^{T}d\Omega^{e}=\tilde{B}^{e}-\left({Q}_{x}^{e}\right)^{T}, 
\end{equation}
where the boundary operator $\tilde{B}^{e}=diag(-1,0,\ldots,0,1)$ contains non-zero values only on the boundary.
\end{remark}
\noindent Similarly, we define the weak second derivative operator $Q_{xx}^{e}$ in SBP form at the element level as:
\begin{equation}
\label{Eq:QxxSBP_OP_MM03}
Q_{xx}^{e}=\int_{\Omega^{e}}\bm{L}\left(\partial_{xx}^{2}\bm{\mathcal{L}}\right)^{T}d\Omega^{e}= \left.\bm{\mathcal{L}}\left(\partial_{x}\bm{\mathcal{L}}\right)^{T} \right|_{0}^{k} - \int_{\Omega^{e}}\left(\partial_{x}\bm{\mathcal{L}} \right)\left(\partial_{x}\bm{\mathcal{L}} \right)^{T}d\Omega^{e}.
\end{equation}

Next, we map each element from the physical domain $x\in [0,l_{x}^{e}]$ (where $l_{x}^{e}$ defines the length of the element) to the parametric domain $\xi\in[-1,1]$ using the Jacobian $J=\partial x/\partial \xi$ and compute the spatial integrals using Gauss-Lobatto (GL) quadrature, e.g., for a given function $f(x)$ the integral is computed as:
\begin{equation}
\int_{-1}^{1}f(x)dx=\sum_{i=0}^{k}f(\xi_{i})\omega_{i}|J|.
\end{equation}
Here $\xi_{i}$ refers to the GL quadrature points, and the corresponding quadrature weights are defined by $\omega_{i}$. For the present study, we also assume that the interior nodes within the elements are collocated at the GL points. Therefore, one can compute $P^{e}$ and $Q_{x}^{e}$ (defined in \ref{Eq:SBP_OPS_1DM01}) as:
\begin{equation}
\begin{split}
P^{e}&\approx \sum_{i=0}^{k} \bm{\mathcal{L}}(\xi_{i})\bm{\mathcal{L}}^{T}(\xi_{i})\omega_{i}|J| = |J|\textrm{diag}[\omega_{0},\omega_{2},\ldots, \omega_{k}],\\
Q_{x}^{e}&\approx\sum_{i=0}^{k}\bm{\mathcal{L}}(\xi_{i})\left(\partial_{\xi}\bm{\mathcal{L}}(\xi_{i})\right)^{T}\omega_{i}. 
\end{split}
\end{equation}
\noindent Once we have $P^{e}$ and $Q_{x}^{e}$, we define the strong-form first derivative SBP operator as:
\begin{equation}
\label{Eq:SBP_1stOrderOp}
D_{x}^{e} = \left(P^{e}\right)^{-1}Q_{x}^{e},
\end{equation}

\noindent The weak form second derivative $Q_{xx}^{e}$ in (\ref{Eq:QxxSBP_OP_MM03}) may now be reformulated as:
\begin{equation}
\label{Eq:QxxSBP_OP_MM04}
Q_{xx}^{e}=\left.\bm{\mathcal{L}}\left(D_{x}^{e}\bm{\mathcal{L}} \right)^{T}\right|_{0}^{k}- \left(D_{x}^{e} \right)^{T}P^{e}D_{x}^{e}=\tilde{B}^{e}D_{x}^{e} - \left(D_{x}^{e} \right)^{T}P^{e}D_{x}^{e}.
\end{equation}

\noindent The corresponding strong-form second derivative SBP operator at the element level is:
\begin{equation}
D_{xx}^{e}=\left(P^{e}\right)^{-1}Q_{xx}^{e}.
\end{equation}
\subsection{Construction of the global matrices}
\label{Sec:GlobalMatrixConstruction}
The single element matrices are next assembled to obtain the global matrices, i.e., the global mass matrix $(P)$, the global $Q_{x}$ matrix and the global $Q_{xx}$ matrix. This process is exemplified by considering a two-element mesh. The global matrix $K$ is now merged as:
\begin{equation}
K = \sum_{e=1}^{n_{\textrm{el}}} K^{e}
=\begin{pmatrix} K^{L}_{00} & \ldots & K^{L}_{0k} & &  0 \\
\vdots & \ddots & \vdots & & \\
 K^{L}_{k0}&\ldots &\left( K^{L}_{kk} + K^{R}_{00}\right)& \ldots & K^{R}_{0k}\\
 & & \vdots&\ddots&\vdots \\
 0& & K^{R}_{k0} & \ldots &K^{R}_{kk}
 \end{pmatrix}.
\end{equation}
where the superscripts $L$ and $R$ denote the left and right element matrices, and the subscript is the associated row and column indices. 
\begin{remark}
After computing the global mass matrix $P$, the global $Q_{x}$, and the global $Q_{xx}$ the global SBP operators for the first and second derivatives are determined as $D_{x}=P^{-1}Q_{x}$ and $D_{xx}=P^{-1}Q_{xx}$, respectively. Note also that the SBP-property in (\ref{Eq:SBP_PropQx}) is preserved for the global operator.
\end{remark}
\subsection{Computations of the spatial operators in two-dimensional domain}
\label{Sec:Extension_Multidimension}
The computed one-dimensional operators are extended to two-dimensions using Kronecker products. Let the subscripts $x$ and $y$ represent the one-dimensional global mass matrices computed in $x$ and $y$ directions, respectively. The corresponding 2D matrices are 
\begin{equation}
\label{Eq:2D_SBP_matrices_scalarField}
\begin{split}
P = {P}_{x}\otimes {P}_{y},\quad D_{x} = {P}_{x}^{-1}{Q}_{x}\otimes I_{N},\quad D_{y} =  I_{M}\otimes {P}_{y}^{-1}{Q}_{y},
\end{split}
\end{equation}
where $I_{M}$ and $I_{N}$ define the identity matrices of dimensions $\left(M\times M\right)$ and $\left(N\times N\right)$, respectively.
\subsection{The discrete system}
We now extend the 2D matrices in (\ref{Eq:2D_SBP_matrices_scalarField}) to the three-field $\mathbf{W}^T=(u,v,p)$ formulation. The corresponding SBP operators are constructed as:
\begin{equation}
\begin{split}
\mathbf{P} &= I_{3}\otimes P,\;~~\mathbf{Q}_{x} = I_{3}\otimes Q_{x},~~~ \mathbf{D}_{x} = I_{3}\otimes D_{x},~\; \mathbf{Q}_{xx} = I_{3}\otimes Q_{xx},~ \mathbf{D}_{xx} = I_{3}\otimes D_{xx},\\
\mathbf{Q}_{y} &= I_{3}\otimes Q_{y},~ \mathbf{D}_{y} = I_{3}\otimes D_{y},,~ \mathbf{Q}_{yy} = I_{3}\otimes Q_{yy},~ \mathbf{D}_{yy} = I_{3}\otimes D_{yy}.
\end{split}
\end{equation}

Therefore, with the introduction of the SBP operators, we may define the corresponding variational problem for the INS (\ref{Eq:WeakImposition_INS_SAT}) by pre-multiplying with the weight function and integrating over $\Omega$ as follows: find $\mathbf{W}\in \mathcal{V}$, where $\mathcal{V}:=\{\mathbf{W}\in [H^{1}(\Omega)]^{2}\times L^{2}(\Omega) \}$ such that
\begin{equation}
\label{Eq:SemiDiscreteFormFE_INS_SAT}
\tilde{\mathbf{I}}\mathbf{P}\partial_{t}\mathbf{W}+\dfrac{1}{2}\left(\mathbf{A}\mathbf{Q}_{x}+\mathbf{Q}_{x}\mathbf{A}+\mathbf{B}\mathbf{Q}_{y}+\mathbf{Q}_{y}\mathbf{B}\right)-\epsilon\tilde{\mathbf{I}}\left(\mathbf{Q}_{xx} + \mathbf{Q}_{yy} \right)=\sum_{l\in\{n,s,e,w\}}\Sigma^{l}\left(I_{3}\otimes\mathbb{P}^{l}\right)\left(\bm{\mathcal{H}}^{l}\mathbf{W}-\bm{\mathcal{G}}^{l}  \right).
\end{equation}
The discrete version of the continuous INS problem in (\ref{Eq:WeakImposition_INS_SAT}) using (\ref{Eq:SemiDiscreteFormFE_INS_SAT}) now reads:
\begin{equation}
\label{Eq:Semi_discreteApprox_mod01}
\begin{split}
&\tilde{\mathbf{I}}\partial_{t}\mathbf{W} +\bm{\mathcal{D}}\left(\mathbf{W} \right)\mathbf{W}=\mathbb{SAT}\quad\textrm{where}\quad\mathbb{SAT}=\sum_{l\in\{n,s,e,w\}}\mathbf{P}^{-1}\Sigma^{l}\left(I_{3}\otimes\mathbb{P}^{l}\right)\left(\bm{\mathcal{H}}^{l}\mathbf{W}-\bm{\mathcal{G}}^{l}  \right),\\
&\tilde{\mathbf{I}}\mathbf{W}\left(0\right) = \bm{\mathcal{F}},
\end{split}
\end{equation}
where the operator $\bm{\mathcal{D}}(\mathbf{w})$ is the discrete counterpart of the operator $\mathcal{D}(\mathbf{w})$ in (\ref{Eq:Spatial_operator_sbp_sat}) and (\ref{Eq:WeakImposition_INS_SAT}) given by:
\begin{equation}
\bm{\mathcal{D}}\left(\mathbf{W} \right)=\dfrac{1}{2}\left(\mathbf{A}{\mathbf{D}_{x}} + {\mathbf{D}_{x}}\mathbf{A} + \mathbf{B}{\mathbf{D}_{y}} + {\mathbf{D}_{y}}\mathbf{B}\right) - \epsilon\tilde{\mathbf{I}}\left({\mathbf{D}^{2}_{xx}} + {\mathbf{D}^{2}_{yy}}  \right).
\end{equation}
Further, the block diagonal matrices $\tilde{\mathbf{I}}$, $\mathbf{A}$, and $\mathbf{B}$ which commute with $\mathbf{P}$ are defined as:
\begin{align}
\tilde{\mathbf{I}} = \begin{pmatrix}\mathbf{I} & \bm{0} &\bm{0} \\ \bm{0} & \mathbf{I} & \bm{0} \\ \bm{0} & \bm{0} &\bm{0}   \end{pmatrix};\quad \mathbf{A}=\begin{pmatrix} \textrm{diag}(\mathbf{u}) & \bm{0} & \mathbf{I} \\ \bm{0}&\textrm{diag}(\mathbf{u}) & \bm{0} \\ \mathbf{I} & \bm{0} & \bm{0} \end{pmatrix};\quad \mathbf{B} = \begin{pmatrix}\textrm{diag}(\mathbf{v})& \bm{0} &\bm{0} \\ \bm{0} & \textrm{diag}(\mathbf{v}) & \mathbf{I} \\ \bm{0} & \mathbf{I} & \bm{0}  \end{pmatrix},
\end{align}
where $\mathbf{I}$ and $\bm{0}$ represent the identity and zero matrices of size $(M\times N)\times (M\times N)$.

\begin{remark}
The $\mathbb{SAT}$ term in the semi-discrete formulation (\ref{Eq:Semi_discreteApprox_mod01}) is related to the continuous formulation (\ref{Eq:WeakImposition_INS_SAT}) with $\mathbf{P}^{-1}$ as the discrete lifting operator.
\end{remark}

In (\ref{Eq:Semi_discreteApprox_mod01}), the operator $\mathbb{P}^{l}$ is the global mass matrix restricted to the boundary segment $l$ defined as:
\begin{equation}
\mathbb{P}^{l\in\{n,s,e,w\}} =\begin{cases}
{P}_{x} \otimes E_{N}, & \text{for north boundary}, \quad \text{where}\;\; E_{N} = \operatorname{diag}(0, \ldots, 0, 1), \\
P_{x} \otimes E_{1}, & \text{for south boundary}, \quad \text{where}\;\;E_{1} = \operatorname{diag}(1, 0, \ldots, 0), \\
E_{M} \otimes {P}_{y}, & \text{for east boundary}, \quad \;\text{where}\;\; E_{M} = \operatorname{diag}(0, \ldots, 0, 1), \\
E_{1} \otimes {P}_{y}, & \text{for west boundary}, \quad \;\text{where} \;\;E_{1} = \operatorname{diag}(1, 0, \ldots, 0).
\end{cases}
\end{equation}
The operators $\bm{\mathcal{H}}^{l}$ discretely approximate the continuous boundary operator $H$ (see (\ref{Eq:SolidWallBC_ContProb})) and $\bm{\mathcal{G}}^{l}$ defines the boundary data prescribed at the boundary nodes. The vector $\bm{\mathcal{F}}$ contains the initial data prescribed on the node points. The penalty matrix $\Sigma^{l}$ mimics the penalty matrix of the continuous problem (\ref{Eq:WeakImposition_INS_SAT}). 

In this paper, we limit ourselves to rectangular domains and orthogonal meshes. Before deriving the discrete energy estimate, we need the unit normals along the boundary segment $l$ as:  
\begin{equation}
\left(\mathbf{N}_{x}^{l},\mathbf{N}_{y}^{l}\right)=\begin{cases}
\left(\bm{0},I_{M}\otimes E_{N}  \right) & \text{for north boundary},\\
\left(\bm{0},I_{M}\otimes -E_{1}  \right) & \text{for south boundary},\\
\left(E_{M}\otimes I_{N},\bm{0}   \right) & \text{for east boundary},\\
\left(-E_{1}\otimes I_{N},\bm{0} \right) & \text{for west boundary}.
\end{cases}
\end{equation}

\noindent Next, we rewrite the boundary conditions for the continuous problem (defined in (\ref{Eq:SolidWallBC_ContProb})) in a discrete sense as:
\begin{equation}
\bm{\mathcal{H}}\mathbf{W}=\begin{pmatrix}\mathbf{N}_{x}^{l} & \mathbf{N}_{y}^{l} & \bm{0} \\
-\mathbf{N}_{y}^{l} & \mathbf{N}_{x}^{l} & \bm{0}\\ \bm{0} & \bm{0} & \bm{0}\end{pmatrix}\begin{pmatrix}\mathbf{u}\\\mathbf{v}\\ \mathbf{p}  \end{pmatrix}=\begin{pmatrix}\mathbf{U}_{n}^{l}\\ \mathbf{U}_{t}^{l} \\ \bm{0} \end{pmatrix}.
\end{equation}
where $\mathbf{U}_{n}^{l}$ and $\mathbf{U}_{t}^{l}$ are the normal and tangential velocity components defined at the boundary nodes.
\begin{remark}
Since $\mathbf{u}, \mathbf{v} \in H^1(\Omega)$, their traces on the boundary belong to 
$ H^{1/2}(\partial\Omega)$. Consequently the boundary operators involving normal and tangential components are defined in the weak sense.    
\end{remark}
The norm in $H^{1/2}(\partial\Omega)$ is defined by $\norm{u}_{H^{1/2}(\partial\Omega)}^{2}=\norm{u_{L^2{(\partial\Omega)}}}^{2}+\int_{\partial\Omega}\int_{\partial\Omega}\frac{|u(x)-u(y)|^{2}}{|x-y|^2}ds_{x} ds_{y} $, for any two arbitrary points $x,y$ on $\partial\Omega$, where $ds_x$ and $ds_y$ denote the corresponding infinitesimal length elements at these points.
\section{The semi-discrete energy estimate}
Following \cite{nordstrom2017roadmap}, we now mimic the continuous energy method in Section~\ref{Sec:ContEnergyAnalysis} by multiplying (\ref{Eq:Semi_discreteApprox_mod01}) with $\mathbf{W}^{T}\mathbf{P}$, adding its transpose, and utilizing the SBP properties in (\ref{Eq:SBP_PropQx}) and (\ref{Eq:QxxSBP_OP_MM04}) to obtain:
\begin{equation}
\label{Eq:Semidiscrete_EnergyEq1}
\dfrac{d}{dt}\norm{\mathbf{W}}^{2}_{\tilde{\mathbf{I}}\mathbf{P}} + 2\epsilon\left(\norm{\mathbf{D}_{x}\mathbf{W}}^{2}_{\tilde{\mathbf{I}}\mathbf{P}} + \norm{\mathbf{D}_{y}\mathbf{W}}^{2}_{\tilde{\mathbf{I}}\mathbf{P}}\right)=\mathbb{BC}\;\textrm{where}\;\mathbb{BC}=\mathbb{BT} + \sum_{l\in\{n,e,s,w\}}2\mathbf{W}^{T} \Sigma^{l}\left(I_{3}\otimes\mathbb{P}^{l}\right)\left(\bm{\mathcal{H}}^{l}\mathbf{W}-\bm{\mathcal{G}}^{l}  \right).
\end{equation}
Here $\norm{\mathbf{W}}^{2}_{\tilde{\mathbf{I}}\mathbf{P}}=\mathbf{W}^{T}\tilde{\mathbf{I}}\mathbf{P}\mathbf{W}$ and the dissipation $\norm{\mathbf{D}_{x}\mathbf{W}}^{2}_{\tilde{\mathbf{I}}\mathbf{P}}+\norm{\mathbf{D}_{y}\mathbf{W}}^{2}_{\tilde{\mathbf{I}}\mathbf{P}}=$$\left(\mathbf{D}_{x}\mathbf{W}\right)^{T}\tilde{\mathbf{I}}\mathbf{P}\left(\mathbf{D}_{x}\mathbf{W} \right)+ $ $\left(\mathbf{D}_{y}\mathbf{W}\right)^{T}$$\tilde{\mathbf{I}}\mathbf{P}$ $\left(\mathbf{D}_{y}\mathbf{W} \right)$ is strictly non-negative. The boundary term $\mathbb{BT}$ corresponding to (\ref{Eq:INS_boundaryIntegral}) on the RHS of (\ref{Eq:Semidiscrete_EnergyEq1}) is: 
\begin{equation}
\begin{split}
\mathbb{BT}=&-\sum_{l\in\{n,e,s,w \}}\Biggl[\left(\mathbf{U}_{n}^{l}\right)^{T}\left(I_{3}\otimes\mathbb{P}^{l}\right)\operatorname{diag}\left(\mathbf{U}_{n}^{l} \right)\mathbf{U}_{n}^{l} + \left(\mathbf{U}_{t}^{l}\right)^{T}\left(I_{3}\otimes\mathbb{P}^{l}\right)\operatorname{diag}\left(\mathbf{U}_{n}^{l} \right)\mathbf{U}_{t}^{l} + 2\left(\mathbf{U}_{n}^{l}\right)^{T}\left(I_{3}\otimes\mathbb{P}^{l}\right)\mathbf{p}\\
&\qquad\quad-2\epsilon\left(\left(\mathbf{U}_{n}^{l}\right)^{T}\left(I_{3}\otimes\mathbb{P}^{l}\right)\mathbf{D}_{n}^{l}\mathbf{U}_{n}^{l} + \left(\mathbf{U}_{t}^{l}\right)^{T}\left(I_{3}\otimes\mathbb{P}^{l}\right)\mathbf{D}_{n}^{l}\mathbf{U}_{t}^{l} \right)\Biggr],
\end{split}
\end{equation}
where $\mathbf{D}_{n}^{l}=\mathbf{N}_{x}^{l}\mathbf{D}_{x} + \mathbf{N}_{y}^{l}\mathbf{D}_{y}$ represents the normal derivative at the boundary segment $l$. 

\begin{proposition}
The energy rate in (\ref{Eq:Semidiscrete_EnergyEq1}) leads for energy stability if $\mathbb{BC}$ is non-positive for $\bm{\mathcal{G}}=\bm{0}$.
\end{proposition}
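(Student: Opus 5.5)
The plan mirrors the proof of Proposition~\ref{Proposition:NormBound} at the semi-discrete level: integrate the energy rate (\ref{Eq:Semidiscrete_EnergyEq1}) in time.

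First, I will check that the left-hand side of (\ref{Eq:Semidiscrete_EnergyEq1}) has the right structure.
\begin{itemize}
\item $\mathbf{P}=I_3\otimes P_x\otimes P_y$ is diagonal with positive Gauss--Lobatto weights (times $|J|$).
\item Since $\tilde{\mathbf{I}}$ commutes with $\mathbf{P}$, the matrix $\tilde{\mathbf{I}}\mathbf{P}$ is symmetric positive semi-definite.
\item Hence $\norm{\mathbf{W}}^{2}_{\tilde{\mathbf{I}}\mathbf{P}}=\mathbf{u}^{T}P\mathbf{u}+\mathbf{v}^{T}P\mathbf{v}$ is a genuine (semi-)norm on the velocity components, exactly like $\norm{\cdot}_{\tilde I}$ in the continuous case.
\item For the same reason both dissipation terms $\norm{\mathbf{D}_{x}\mathbf{W}}^{2}_{\tilde{\mathbf{I}}\mathbf{P}}$ and $\norm{\mathbf{D}_{y}\mathbf{W}}^{2}_{\tilde{\mathbf{I}}\mathbf{P}}$ are non-negative.
\end{itemize}
These dissipation terms come from the second-derivative SBP form (\ref{Eq:QxxSBP_OP_MM04}), after assembly and the Kronecker extension in Section~\ref{Sec:Extension_Multidimension}. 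I take (\ref{Eq:Semidiscrete_EnergyEq1}) itself as given from the preceding derivation, which used the SBP property (\ref{Eq:SBP_PropQx}).

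Second, I set $\bm{\mathcal{G}}=\bm{0}$ and assume $\mathbb{BC}\leq 0$, as in the hypothesis. Then (\ref{Eq:Semidiscrete_EnergyEq1}) gives
\begin{equation*}
\frac{d}{dt}\norm{\mathbf{W}}^{2}_{\tilde{\mathbf{I}}\mathbf{P}} + 2\epsilon\left(\norm{\mathbf{D}_{x}\mathbf{W}}^{2}_{\tilde{\mathbf{I}}\mathbf{P}} + \norm{\mathbf{D}_{y}\mathbf{W}}^{2}_{\tilde{\mathbf{I}}\mathbf{P}}\right)\leq 0 .
\end{equation*}
Integrating from $0$ to $T$ and using the initial condition $\tilde{\mathbf{I}}\mathbf{W}(0)=\bm{\mathcal{F}}$, together with $\tilde{\mathbf{I}}^2=\tilde{\mathbf{I}}$, yields
\begin{equation*}
\left(\norm{\mathbf{W}}^{2}_{\tilde{\mathbf{I}}\mathbf{P}}\right)_{t=T}+2\epsilon\int_0^T\left(\norm{\mathbf{D}_{x}\mathbf{W}}^{2}_{\tilde{\mathbf{I}}\mathbf{P}} + \norm{\mathbf{D}_{y}\mathbf{W}}^{2}_{\tilde{\mathbf{I}}\mathbf{P}}\right)dt\leq \norm{\bm{\mathcal{F}}}^{2}_{\mathbf{P}} .
\end{equation*}
This is the discrete analogue of (\ref{Eq:EnergyEstimateStrongForm}). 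It bounds the discrete velocity energy by the initial data for all $T$, which is the claimed energy stability.

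The only delicate point is the semi-definiteness of the norm, since the pressure carries no energy. I will argue that stability is meant, as in the continuous case, in the $\tilde{I}$-weighted norm, so no bound on $\mathbf{p}$ is claimed. As a remark, the non-positivity of $\mathbb{BC}$ would be achieved by choosing $\Sigma^{l}$ as the discrete counterparts $R^{T}(\Sigma_1'+\epsilon\Sigma_2')$ of the continuous penalties, with $\operatorname{diag}(\mathbf{U}_n^l)$ and $\mathbf{D}_n^l$ in place of $u_n$ and $\partial_n$. That choice cancels $\mathbb{BT}$ term by term, exactly as in the continuous SAT analysis. The proposition, however, only requires the implication, so this is not part of the proof.
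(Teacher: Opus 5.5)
Your proposal is correct and follows the paper's proof: assume $\mathbb{BC}\leq 0$ with $\bm{\mathcal{G}}=\bm{0}$ and integrate (\ref{Eq:Semidiscrete_EnergyEq1}) from $0$ to $T$, using the initial condition. Your right-hand side $\norm{\bm{\mathcal{F}}}^{2}_{\mathbf{P}}$ agrees with the paper's $\norm{\bm{\mathcal{F}}}_{\tilde{\mathbf{I}}\mathbf{P}}$ once the evidently intended square is included, since $\tilde{\mathbf{I}}\bm{\mathcal{F}}=\bm{\mathcal{F}}$.
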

\begin{proof}
Let $\mathbb{BC}$ be non-positive, then for $\bm{\mathcal{G}}=\bm{0}$, the time integration of (\ref{Eq:Semidiscrete_EnergyEq1}) yields the energy estimate:
\begin{equation}
\label{Eq:Semidiscrete_EnergyEstimate}
\left(\norm{\mathbf{W}}^{2}_{\tilde{\mathbf{I}}\mathbf{P}}\right)_{t=T} + 2\epsilon\int_{0}^{T}\left(\norm{\mathbf{D}_{x}\mathbf{W}}^{2}_{\tilde{\mathbf{I}}\mathbf{P}} + \norm{\mathbf{D}_{y}\mathbf{W}}^{2}_{\tilde{\mathbf{I}}\mathbf{P}} \right)dt\leq \norm{\bm{\mathcal{F}}}_{\tilde{\mathbf{I}}\mathbf{P}}.
\end{equation}
This proves the theorem. 
\end{proof}
Eq.~(\ref{Eq:Semidiscrete_EnergyEstimate}) represents the semi-discrete energy estimate of the continuous version (\ref{Eq:EnergyEstimateStrongForm}). 
\subsection{Weak imposition of the boundary conditions for the discrete problem}
\label{Sec:WeakImpositionBC_DiscreteProb}
We now mimic the weak imposition of the boundary conditions in the continuous problem, which leads to Proposition~\ref{Proposition:SemiDiscreteEnergystability}. 
\begin{proposition}
\label{Proposition:SemiDiscreteEnergystability}
The semi-discrete formulation \eqref{Eq:Semidiscrete_EnergyEq1} satisfies an energy estimate if the penalty matrix is chosen as 
$
\bm{\Sigma}^{l} = (\mathbf{R}^{l})^{T} \bigl(\bm{\Sigma}_{1}^{\prime l} + \epsilon \bm{\Sigma}_{2}^{\prime l}\bigr),
$
where
\begin{equation}
\label{Eq:PenalMat_Discrete}
\mathbf{R}^{l} =
\begin{pmatrix}
\mathbf{N}_{x}^{l} & \mathbf{N}_{y}^{l} & \bm{0} \\
- \mathbf{N}_{y}^{l} & \mathbf{N}_{x}^{l} & \bm{0} \\
\bm{0} & \bm{0} & \mathbf{I}
\end{pmatrix}, \quad
\bm{\Sigma}_{1}^{\prime l} =
\begin{pmatrix}
\dfrac{1}{2}\mathrm{diag}(\mathbf{U}_{n}^{l}) & \bm{0} & \bm{0} \\
\bm{0} & \dfrac{1}{2}\mathrm{diag}(\mathbf{U}_{n}^{l}) & \bm{0} \\
\mathbf{I} & \bm{0} & \bm{0}
\end{pmatrix}, \quad and \quad
\bm{\Sigma}_{2}^{\prime l} =
\begin{pmatrix}
- (\mathbf{D}_{n}^{l})^{T} & \bm{0} & \bm{0} \\
\bm{0} & - (\mathbf{D}_{n}^{l})^{T} & \bm{0} \\
\bm{0} & \bm{0} & \bm{0}
\end{pmatrix}.
\end{equation}
\end{proposition}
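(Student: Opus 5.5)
The plan is to mirror the continuous proof of the earlier penalty-matrix proposition, one boundary segment $l$ at a time, and then invoke the preceding proposition (energy stability when $\mathbb{BC}\le 0$ for $\bm{\mathcal{G}}=\bm{0}$). With $\bm{\mathcal{G}}=\bm{0}$, the quantity $\mathbb{BC}$ in (\ref{Eq:Semidiscrete_EnergyEq1}) is a sum over $l\in\{n,e,s,w\}$ of the $l$-contribution to $\mathbb{BT}$ plus $2\mathbf{W}^{T}\bm{\Sigma}^{l}(I_3\otimes\mathbb{P}^{l})\bm{\mathcal{H}}^{l}\mathbf{W}$. So it suffices to show that each segment contribution vanishes. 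I would split each one into an advective bracket containing $\bm{\Sigma}_1^{\prime l}$ and a viscous bracket containing $\epsilon\bm{\Sigma}_2^{\prime l}$, exactly as in (\ref{Eq:BC_ESTIMATE_SOLIDWALL}).

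The first step is the discrete rotation. On rectangular domains with orthogonal meshes, $\mathbf{N}_x^l,\mathbf{N}_y^l$ are diagonal and $\mathbf{R}^l\mathbf{W}=(\mathbf{U}_n^l,\mathbf{U}_t^l,\mathbf{p})^T$ on the boundary nodes of segment $l$. The matrices $\mathbf{N}^l_{x,y}$, $\mathbb{P}^l$ and $\mathrm{diag}(\mathbf{U}_n^l)$ are all diagonal, so they commute. Hence
\[
\mathbf{W}^{T}(\mathbf{R}^{l})^{T}\bm{\Sigma}^{\prime l}(I_3\otimes\mathbb{P}^l)\bm{\mathcal{H}}^l\mathbf{W}=\tilde{\mathbf{W}}^{lT}\bm{\Sigma}^{\prime l}(I_3\otimes\mathbb{P}^l)(\mathbf{U}_n^l,\mathbf{U}_t^l,\bm{0})^T .
\]
One technical point has to be checked here: $\mathbf{R}^l$ applied away from the segment gives zeros in the velocity rows. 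This is harmless because $\mathbb{P}^l$ restricts everything to segment $l$.

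For the advective bracket, inserting $\bm{\Sigma}_1^{\prime l}$ gives
\[
2\Bigl(\tfrac12\mathbf{U}_n^{lT}\mathbb{P}^l\mathrm{diag}(\mathbf{U}_n^l)\mathbf{U}_n^l+\tfrac12\mathbf{U}_t^{lT}\mathbb{P}^l\mathrm{diag}(\mathbf{U}_n^l)\mathbf{U}_t^l+\mathbf{p}^T\mathbb{P}^l\mathbf{U}_n^l\Bigr).
\]
Each of these terms cancels one of the three inviscid terms of $\mathbb{BT}$; the pressure term matches after using the symmetry of $\mathbb{P}^l$. For the viscous bracket, $\epsilon\bm{\Sigma}_2^{\prime l}$ gives
\[
-2\epsilon\bigl(\mathbf{U}_n^{lT}(\mathbf{D}_n^l)^T\mathbb{P}^l\mathbf{U}_n^l+\mathbf{U}_t^{lT}(\mathbf{D}_n^l)^T\mathbb{P}^l\mathbf{U}_t^l\bigr).
\]
Since these are scalars, transposing each one yields $\mathbf{U}^{lT}\mathbb{P}^l\mathbf{D}_n^l\mathbf{U}^l$. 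These cancel the $+2\epsilon$ terms of $\mathbb{BT}$. This is the discrete analogue of the formal $u\partial_n^T=\partial_n u$ in (\ref{Eq:Sigma2_cont}).

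The main obstacle is the bookkeeping of the ordering of the non-commuting factors. In the viscous term, $(\mathbf{D}_n^l)^T$ does not commute with $\mathbb{P}^l$, so I must verify that the placement of $\bm{\Sigma}^{l}$ to the left of $(I_3\otimes\mathbb{P}^l)$ produces exactly $(\mathbf{D}_n^l\mathbf{U})^T\mathbb{P}^l\mathbf{U}$. If it does not, I would take the transpose of the whole scalar term so that it matches the form in $\mathbb{BT}$. I must also check the consistency of the $I_3\otimes\mathbb{P}^l$ versus $\mathbb{P}^l$ notation. Once each segment's contribution vanishes, $\mathbb{BC}=0\le0$, and the previous proposition yields the estimate (\ref{Eq:Semidiscrete_EnergyEstimate}).
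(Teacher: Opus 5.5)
Your proposal is correct and follows the paper's proof. Both rotate with $\mathbf{R}^{l}\mathbf{W}=\tilde{\mathbf{W}}$, split $\mathbb{BC}$ into an advective bracket containing $\bm{\Sigma}_{1}^{\prime l}$ and a viscous bracket containing $\epsilon\bm{\Sigma}_{2}^{\prime l}$, and show that every term cancels, so that $\mathbb{BC}=0$ for $\bm{\mathcal{G}}=\bm{0}$ and (\ref{Eq:Semidiscrete_EnergyEstimate}) follows. Your term-by-term check is more explicit than the paper's, which only asserts the cancellation; in particular, the scalar transpose $\mathbf{U}^{T}(\mathbf{D}_{n}^{l})^{T}\mathbb{P}^{l}\mathbf{U}=\mathbf{U}^{T}\mathbb{P}^{l}\mathbf{D}_{n}^{l}\mathbf{U}$ that you already carry out settles the ordering concern you flag at the end.
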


\begin{proof}
We use $\mathbf{R}\mathbf{W}=\tilde{\mathbf{W}}$ and write $\mathbb{BC}$ in (\ref{Eq:Semidiscrete_EnergyEq1}) as:
\begin{equation}
\label{Eq:Discrete_BC01}
\begin{split}
\mathbb{BC}= &\sum_{l\in\{n,e,s,w \}}\Biggl[-\left(\mathbf{U}_{n}^{l}\right)^{T}\left(I_{3}\otimes\mathbb{P}^{l}\right)\operatorname{diag}\left(\mathbf{U}_{n}^{l} \right)U_{n}^{l} + \left(U_{t}^{l}\right)^{T}\mathbb{P}^{l}\operatorname{diag}\left(\mathbf{U}_{n}^{l} \right)\mathbf{U}_{t}^{l} + 2\left(\mathbf{U}_{n}^{l}\right)^{T}\left(I_{3}\otimes\mathbb{P}^{l}\right)\mathbf{p}\\
&+2\tilde{\mathbf{W}}^{T}\Sigma_{1}^{\prime l}\left(I_{3}\otimes\mathbb{P}^{l}\right)\left(\bm{\mathcal{H}}^{l}\mathbf{W}-\bm{\mathcal{G}}^{l}  \right)\Biggr]+2\epsilon\sum_{l\in\{n,e,s,w \}}\Biggl[\left(\mathbf{U}_{n}^{l}\right)^{T}\left(I_{3}\otimes\mathbb{P}^{l}\right)\mathbf{D}_{n}^{l}\mathbf{U}_{n}^{l} + \left(\mathbf{U}_{t}^{l}\right)^{T}\left(I_{3}\otimes\mathbb{P}^{l}\right)\mathbf{D}_{n}^{l}\mathbf{U}_{t}^{l} \\
&+ \tilde{\mathbf{W}}^{T}\Sigma_{2}^{\prime l}\left(I_{3}\otimes\mathbb{P}^{l}\right)\left(\bm{\mathcal{H}}^{l}\mathbf{W}-\bm{\mathcal{G}}^{l}  \right) \Biggr].
\end{split}
\end{equation}
The matrices $\mathbf{R}^{l}$, $\bm{\Sigma}^{\prime l}_{1}$ and $\bm{\Sigma}^{\prime l}_{2}$ in (\ref{Eq:PenalMat_Discrete}) correspond to their continuous counterparts $R$, $\Sigma_{1}^{\prime}$ and $\Sigma_{2}^{\prime}$ in (\ref{Eq:Sigma1_cont})-(\ref{Eq:Sigma2_cont}). Substitution of the penalty matrices defined in (\ref{Eq:PenalMat_Discrete}) into (\ref{Eq:Discrete_BC01}) leads to $\mathbb{BC}=\bm{0}$ in the absence of boundary data. Therefore, the energy estimate (\ref{Eq:Semidiscrete_EnergyEstimate}) follows, and semi-discrete energy stability is proved.
\end{proof}
\section{Time discretizations}
\label{Sec:Time_Discretization}
Here we briefly discuss the temporal discretizations of the semi-discrete problem in (\ref{Eq:Semi_discreteApprox_mod01}). The commonly employed \cite{BurdenFairs2010,tafti1996comparison,baker1982higher,emmrich2004error,wang2012efficient} second-order Backward-Difference (BDF2) implicit time integration scheme with uniform temporal step size $\Delta t$ is employed. We denote the unknown solution fields $\mathbf{W}$ at three consecutive times $t$, $(t+\Delta t)$ and $(t+2\Delta t)$ by $\mathbf{W}^{t}$, $\mathbf{W}^{t+1}$ and $\mathbf{W}^{t+2}$, respectively. The discrete problem to be solved for $\mathbf{W}^{t+2}$ is:
\begin{equation}
\label{Eq:FullDiscrete_nonLin}
\bm{\mathcal{R}}\left(\mathbf{W}^{t+2}  \right)=\tilde{\mathbf{I}}\dfrac{3\mathbf{W}^{t+2}-4\mathbf{W}^{t+1}+\mathbf{W}^{t}}{2\Delta t}+\tilde{\bm{\mathcal{D}}}\left(\mathbf{W}^{t+2}\right)\mathbf{W}^{t+2}=0,
\end{equation}
where $\bm{\mathcal{R}}\left(\mathbf{W}^{t+2}\right)$ is the residual. The operator $\tilde{\bm{\mathcal{D}}}$ accounts for the spatial operator and the $\mathbb{SAT}$ term in (\ref{Eq:Semi_discreteApprox_mod01}).  

\noindent Due to the nonlinearity of (\ref{Eq:FullDiscrete_nonLin}), Newton's method is employed:
\begin{equation}
\label{Eq:FullyDiscreteApprox_lin}
\mathbf{W}^{t+2}=\mathbf{W}^{t+1}-\mathbf{J}^{-1}\bm{\mathcal{R}}\left(\mathbf{W}^{t+1},\mathbf{W}^{t} \right),
\end{equation}
where $\mathbf{J}$ is the Jacobian matrix which is computed exactly following \cite{nordstrom2024explicit}. We solve the linearized problem (\ref{Eq:FullyDiscreteApprox_lin}) iteratively until $\norm{\mathbf{W}^{t+1}-\mathbf{W}^{t}}_{\mathbf{P}}^{2}< tol$, with $\textrm{tol}=10^{-10}$.  
\section{Numerical results}
\label{Sec:Nuemrical_results}
Our code is developed in the MATLAB environment (MATLAB-R2024b). We start with the convergence study using the MMS technique \cite{nordstrom2024explicit,roache2002code} to verify the implementation, and then model the canonical lid-driven cavity and backward-facing step flow problems.
\subsection{Grid convergence study}
\label{Sec:Error_convStudy}
We start with the grid convergence study on the 2D domain $\Omega=[0,1]\times[0,1]$. The MMS solution is:
\begin{equation}
\label{Eq:MMS_INS}
\begin{split}
&u(x,y,t)= 1+0.1\sin(3\pi x-0.40t)\sin(3\pi y- 0.40t),\\
&v(x,y,t)= 1+0.1\cos(3\pi x-0.40t)\cos(3\pi y-0.40t),\\
&p(x,y,t)=\cos(3\pi x-0.40t)\cos(3\pi y-0.40t).
\end{split}
\end{equation}
Substitutions of these analytical solutions into (\ref{Eq:INS_VM_form}) generate initial conditions, boundary conditions and forcing functions. Four uniform meshes consisting of $13\times 13$, $25\times 25$, $37\times 37$ and $49\times 49$ grid points are employed. For each mesh, Lagrange basis functions of degree $k=1$ to $k=4$ are considered. The order of convergence is evaluated with respect to the $\mathbf{P}$-norm of the error: $E=\norm{\mathbf{W}-\mathbf{W}^{h}}_{\mathbf{P}}$$=\sqrt{(\mathbf{W}-\mathbf{W}^{h})^{T}\mathbf{P}(\mathbf{W}-\mathbf{W}^{h})}$. The exact solution $\mathbf{W}$ is obtained by prescribing the analytical solution (\ref{Eq:MMS_INS}) at the nodes and $\mathbf{W}^{h}$ is the numerical one. The simulations are performed using the  time integration scheme in Section~\ref{Sec:Time_Discretization}. A fixed time step of $\Delta t=$ 6.4e-05 is used to minimize temporal error, and the order of convergence is computed at $t=0.4$ seconds. The viscous coefficient for this study is set to $\epsilon=0.1$. To compute the order of convergence between two successive meshes with node counts $N_1$ and $N_2$ in each parametric direction, we use the following formula:
\begin{equation}
\mathcal{O}(E) = \frac{\log_{10}(E_2) - \log_{10}(E_1)}{\log_{10}(N_1) - \log_{10}(N_2)}.
\end{equation} 
\begin{table}
\centering
\caption{Error norms and convergence rates for Lagrange basis functions of degrees 1 to 4}
\label{tab:error_norms_P1P2P3P4}
\resizebox{1.0\textwidth}{!}{%
\begin{tabular}{ccccccccc}
\hline
\multirow{2}{*}{No of nodes} & \multicolumn{4}{c}{$k=1$}                                       & \multicolumn{4}{c}{$k=2$}                                       \\ \cline{2-9} 
                             & ${\norm{u-u^h}}_{P}$ & $\mathcal{O}(E)$ & ${\norm{v-v^h}}_{P}$ & $\mathcal{O}(E)$  & ${\norm{u-u^h}}_{P}$ & $\mathcal{O}(E)$  & ${\norm{v-v^h}}_{P}$ & $\mathcal{O}(E)$  \\ \hline
$13\times13$                 & 1.019e-02	    &  --    		& 1.051e-02 	      & --   		& 6.148e-03 		& --   		     & 6.306e-03 	    & --   \\
$25\times25$                 & 2.119e-03 	    &  2.40  		& 2.122e-03 	      & 2.45 		& 9.233e-04 		& 2.90 		     & 9.528e-04 	    & 2.89 \\
$37\times37$                 & 9.763e-04 	    &  1.98  		& 9.672e-04 	      & 1.98 		& 3.778e-04 		& 2.28 		     & 3.911e-04 	    & 2.27 \\
$49\times49$                 & 5.668e-04 	    &  1.94  		& 5.676e-04 	      & 1.90 		& 2.067e-04 		& 2.15 		     & 2.141e-04 	    & 2.14 \\ 
\hline
\multirow{2}{*}{No of nodes} & \multicolumn{4}{c}{$k=3$}                                       & \multicolumn{4}{c}{$k=4$}                                       \\ \cline{2-9} 
                             & ${\norm{u-u^h}}_{P}$ & $\mathcal{O}(E)$ & ${\norm{v-v^h}}_{P}$ & $\mathcal{O}(E)$  & ${\norm{u-u^h}}_{P}$ & $\mathcal{O}(E)$  & ${\norm{v-v^h}}_{P}$ & $\mathcal{O}(E)$  \\ \hline
$13\times13$                 &   2.869e-03  	    &    -    		&  3.097e-03  		&    -   	&   8.566e-04  		&   -    		&  8.368e-04  		&    -    \\
$25\times25$                 &   9.539e-05  	    &    5.20 		&  9.784e-05  		&    5.28	&   2.863e-05  		&   5.20 		&  2.692e-05  		&    5.26 \\
$37\times37$                 &   1.702e-05  	    &    4.40 		&  1.726e-05  		&    4.43	&   4.770e-06  		&   4.57 		&  4.826e-06  		&    4.38\\
$49\times49$                 &   5.345e-06  	    &    4.12 		&  5.257e-06  		&    4.23	&   1.550e-06  		&   4.00 		&  1.594e-06  		&    3.94 \\ 
\hline
\end{tabular}%
}
\end{table}
\FloatBarrier

From the above Table~\ref{tab:error_norms_P1P2P3P4} it is clear that the order of convergence becomes $(k+1)$ for odd degree and $k$ for even degree Lagrange polynomials. 
We conclude that our implementation is correct.

\subsection{Lid-driven cavity flow problem}
\label{Sec:LDC_PROBLEM}
We consider the canonical lid-driven cavity flow \cite{merrick2018novel,malan2011artificial,oxtoby2012matrix,mandal2023weakly} to assess the performance of our formulation, as shown in the Figure \ref{Fig:DrivenCavityGeom}. 
Despite its apparent simplicity in geometry, the lid-driven cavity problem is challenging due to the presence of velocity jumps in the top two corners. This will be a key test for evaluating the versatility of the SAT boundary conditions. 

In our numerical experiments, we employ a mesh that is refined near the boundaries to improve efficiency and accuracy. This is achieved by stretching the element edge lengths in the $x$- and $y$-directions over the domain $[0,1]$. For a mesh with $M_{\text{el}} \times N_{\text{el}}$ elements, the stretched element edges are defined as \cite{botella1998benchmark}:
\begin{equation}
x_{\text{edge}}(i) = (1 - \cos\pi i / (M - 1))/2,\;\; y_{\text{edge}}(j) = (1 - \cos\pi j / (N - 1))/2, \; i = 0,\ldots, M-1, \;j = 0,\ldots, N-1, \
\end{equation}
To explain the construction of the two-dimensional stretched mesh, we first consider the one-dimensional case. Within each element, we place $(k+1)$ GL nodes, denoted by $\xi_j$, $j=0,\ldots,k$, which are defined on the reference interval $[-1,1]$. Let $x_L = x_{\text{edge}}(i)$ and $x_R = x_{\text{edge}}(i+1)$ be the left and right endpoints of the $i$-th element. The local GL nodes are mapped to the physical coordinates using an affine transformation:
\begin{equation}
x_{\text{local}} = \frac{x_R - x_L}{2} \, \xi_j + \frac{x_R + x_L}{2}, \quad j = 0,\ldots,k.
\end{equation}
A globally continuous stretched grid is then constructed by assembling these transformed nodes across all elements. The same procedure is applied in the $y$-direction. However, because the resulting mesh is non-uniform due to stretching, a coordinate transformation is required to accurately compute SBP operators on such grids. We adopt the methodology described in \cite{aalund2019encapsulated} to compute both the first-order SBP derivative operators and the Laplacian operator associated with the viscous terms, utilizing Definition 3 and Corollary 1, respectively. We use $4^{\text{th}}$-order Lagrange polynomial basis functions on a mesh consisting of $25 \times 25$ elements, which results in a total of $101 \times 101$  node points across the domain. The resulting stretched mesh is shown in Figure ~\ref{Fig:DrivenCavityGeom}.

We initialize all field variables to zero and iteratively solve using (\ref{Eq:FullDiscrete_nonLin}) and (\ref{Eq:FullyDiscreteApprox_lin}) to obtain the steady-state solution by marching with $\Delta t = 0.1$. The problem is solved for a wide range of Reynolds numbers, varying from $\textrm{Re} = 100$ to $\textrm{Re} = 10,000$, to investigate the behavior of the internal flow of a weakly viscous fluid. The computed horizontal and vertical velocities along the vertical and horizontal centerlines of the lid-driven cavity show excellent agreement with the benchmark results \cite{ghia1982high} in Figure~\ref{Figs:LDC_y_u_x_v_comp_study}.
\FloatBarrier
\begin{figure}
\centering
\begin{subfigure}[t]{0.465\textwidth}
    \centering
        \includegraphics[trim={0.0cm 0.0cm 0.0cm 0.0cm}, clip=true,width=1.0\textwidth]{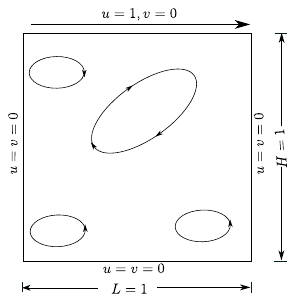}
        \caption*{}
    \end{subfigure}%
\centering
		\begin{subfigure}[t]{0.465\textwidth}
    \centering
        \includegraphics[trim={0.0cm 0.0cm 0.0cm 0.0cm}, clip=true,width=1.0\textwidth]{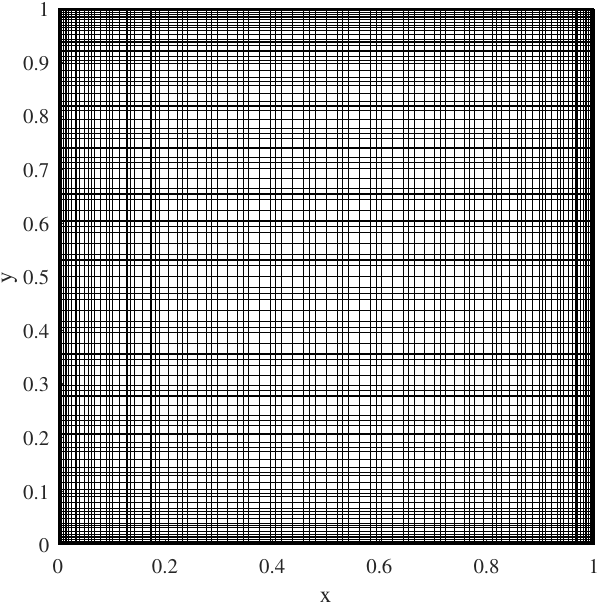}
        \caption*{}
    \end{subfigure}
    \caption{The driven cavity problem geometry (left) and (right) discretization details using $101\times 101$ grid points}
    \label{Fig:DrivenCavityGeom}
\end{figure} 
\begin{figure}
\centering
		\begin{subfigure}[t]{0.32\textwidth}
    \centering
        \includegraphics[trim={0.0cm 0.0cm 0.0cm 0.0cm}, clip=true,width=1.0\textwidth]{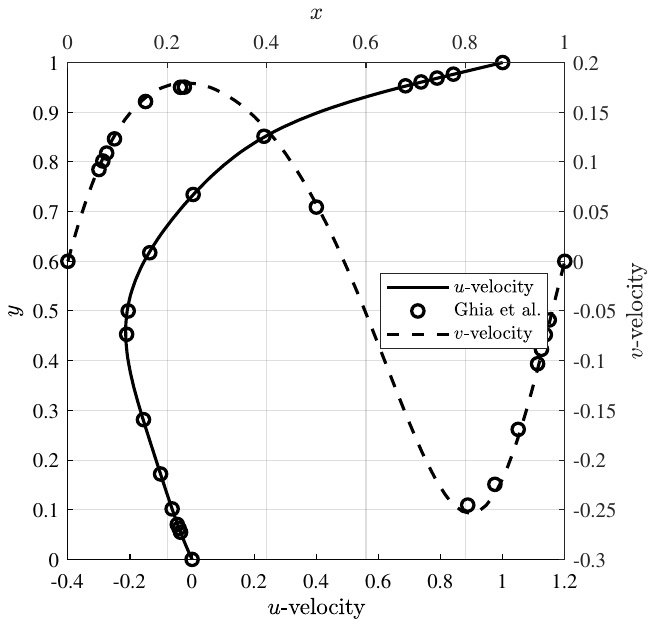}
        \caption{$\textrm{Re}=100$}\label{fig:y_u_x_v_Re_100}
    \end{subfigure}%
    \centering
		\begin{subfigure}[t]{0.32\textwidth}
    \centering
        \includegraphics[trim={0.0cm 0.0cm 0.0cm 0.0cm}, clip=true,width=1.0\textwidth]{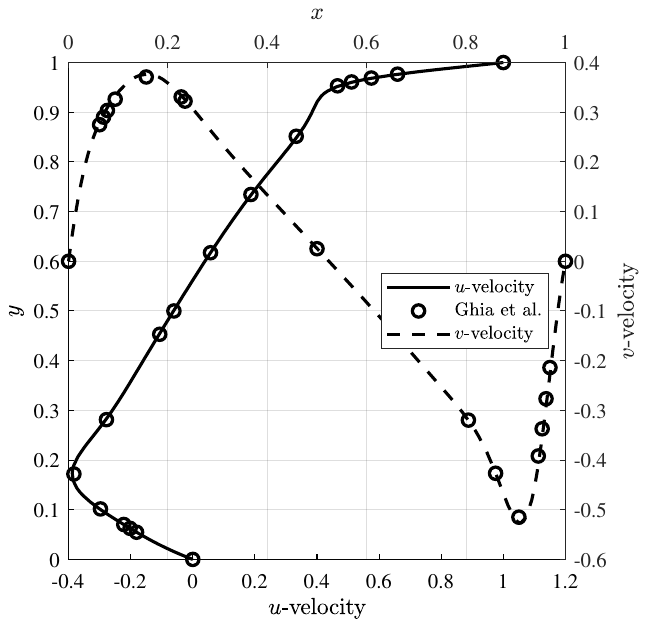}
        \caption{$\textrm{Re}=1,000$}\label{fig:y_u_x_v_Re_1000}
    \end{subfigure}%
  \begin{subfigure}[t]{0.32\textwidth}
    \centering
        \includegraphics[trim={0.0cm 0.0cm 0.0cm 0.0cm}, clip=true,width=1.0\textwidth]{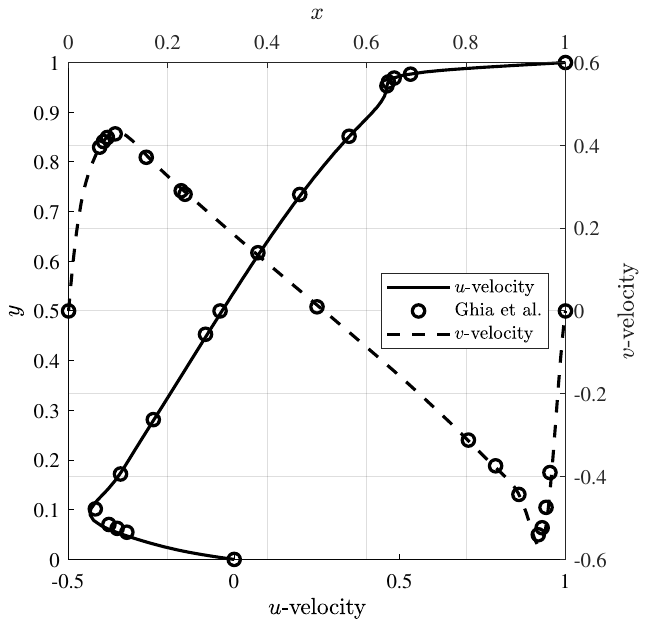}
        \caption{$\textrm{Re}=3,200$}\label{fig:y_u_x_v_Re_3200}
    \end{subfigure}\\
   \begin{subfigure}[t]{0.32\textwidth}
    \centering
        \includegraphics[trim={0.0cm 0.0cm 0.0cm 0.0cm}, clip=true,width=1.0\textwidth]{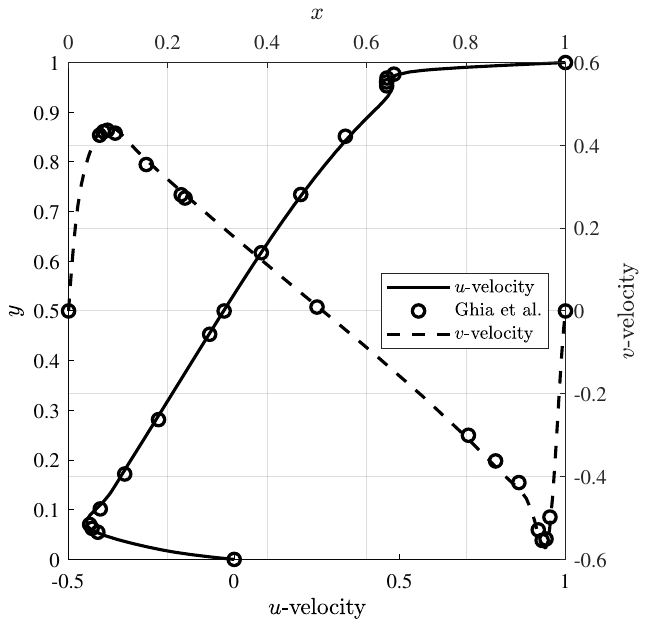}
        \caption{$\textrm{Re}=5,000$}\label{fig:y_u_x_v_Re_5000}
    \end{subfigure}%
    \centering
		\begin{subfigure}[t]{0.32\textwidth}
    \centering
        \includegraphics[trim={0.0cm 0.0cm 0.0cm 0.0cm}, clip=true,width=1.0\textwidth]{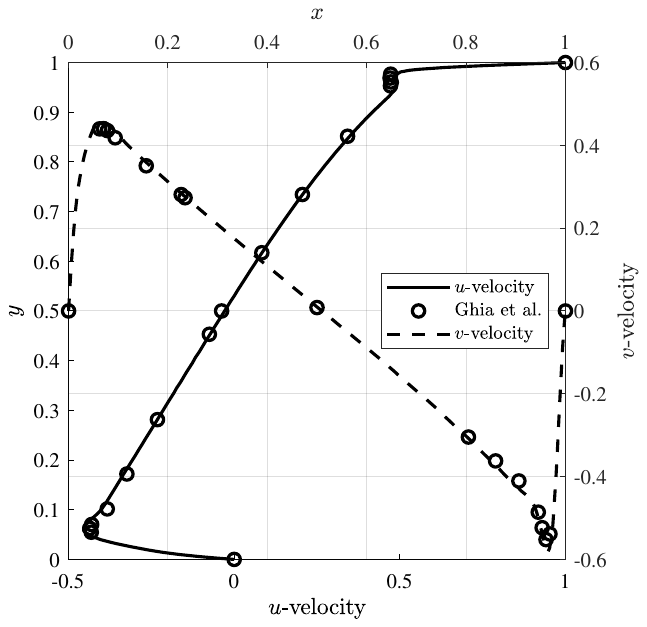}
        \caption{$\textrm{Re}=7,500$}\label{fig:y_u_x_v_Re_7500}
    \end{subfigure}%
  \begin{subfigure}[t]{0.32\textwidth}
    \centering
        \includegraphics[trim={0.0cm 0.0cm 0.0cm 0.0cm}, clip=true,width=1.0\textwidth]{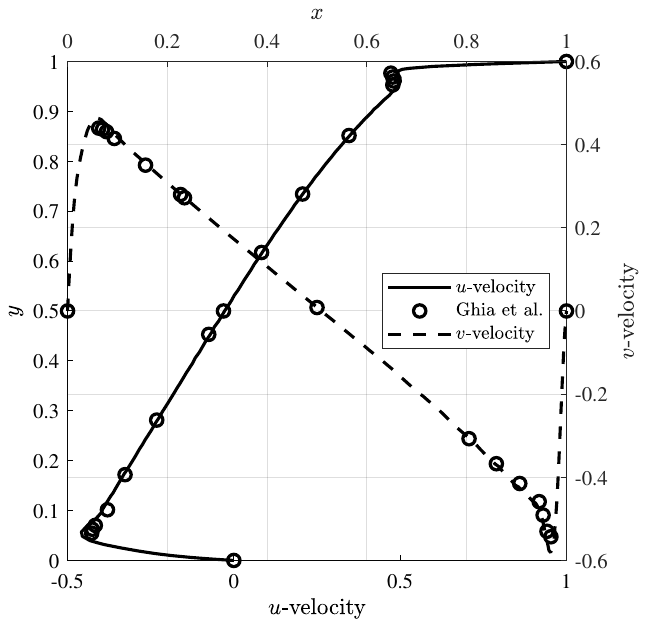}
        \caption{$\textrm{Re}=10,000$}\label{fig:y_u_x_v_Re_10000}
    \end{subfigure} 
 \caption{Comparisons of the numerically computed horizontal velocity $(u)$ and vertical velocity $(v)$ profile with the benchmark data (Ghia et al.~\cite{ghia1982high}) along $x=0.5$ and $y=0.5$ for different Reynolds numbers}
\label{Figs:LDC_y_u_x_v_comp_study}
\end{figure} \FloatBarrier
\begin{figure}
\centering
\begin{subfigure}{0.40\textwidth}
    \centering
        \includegraphics[trim={0.0cm 0.0cm 0.0cm 0.0cm}, clip=true,width=1.0\textwidth]{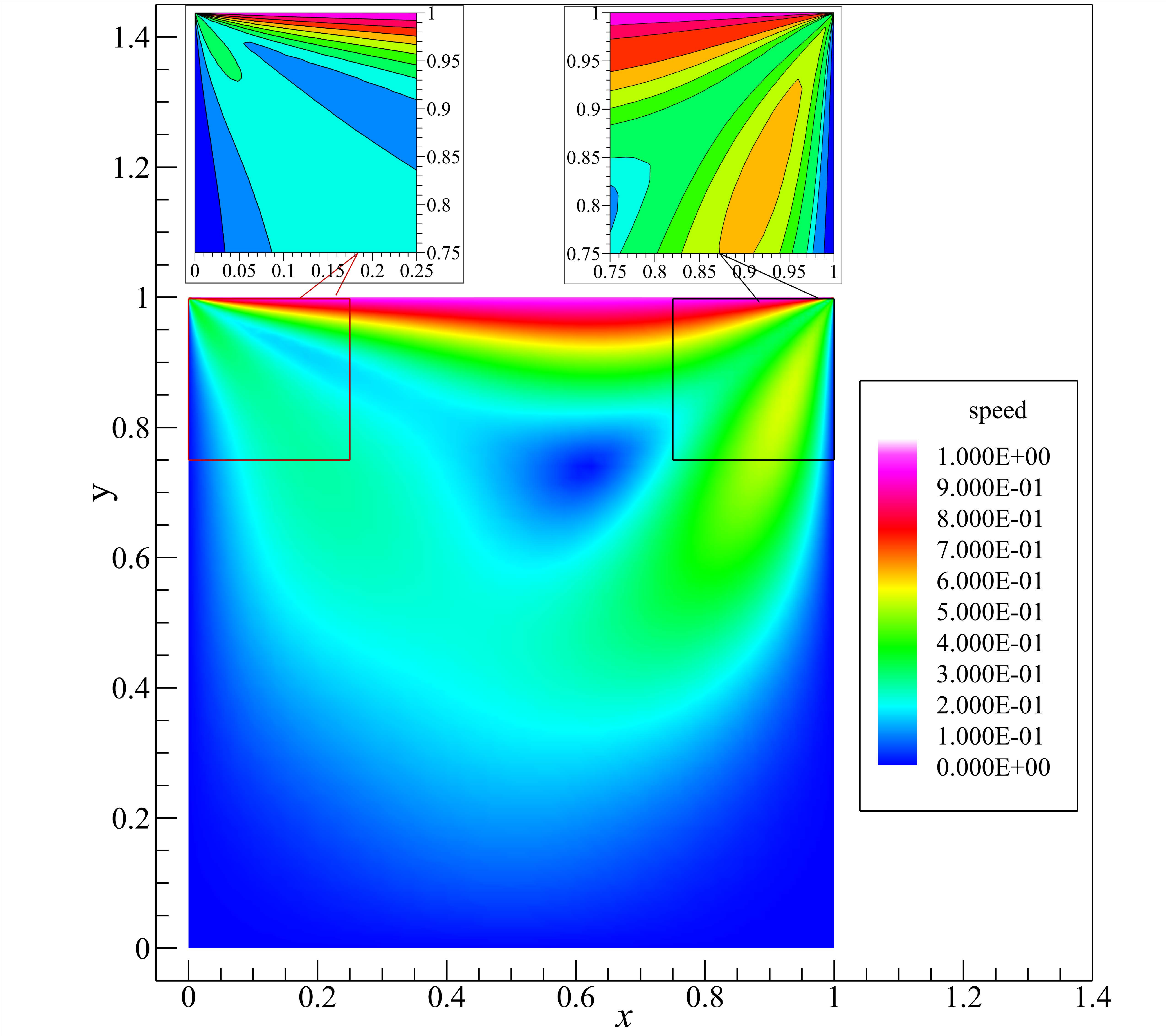}
        \caption{$\textrm{Re}=100$ }\label{Fig:Speed_Re100}
    \end{subfigure}%
		\begin{subfigure}{0.40\textwidth}
    \centering
        \includegraphics[trim={0.0cm 0.0cm 0.0cm 0.0cm}, clip=true,width=1.0\textwidth]{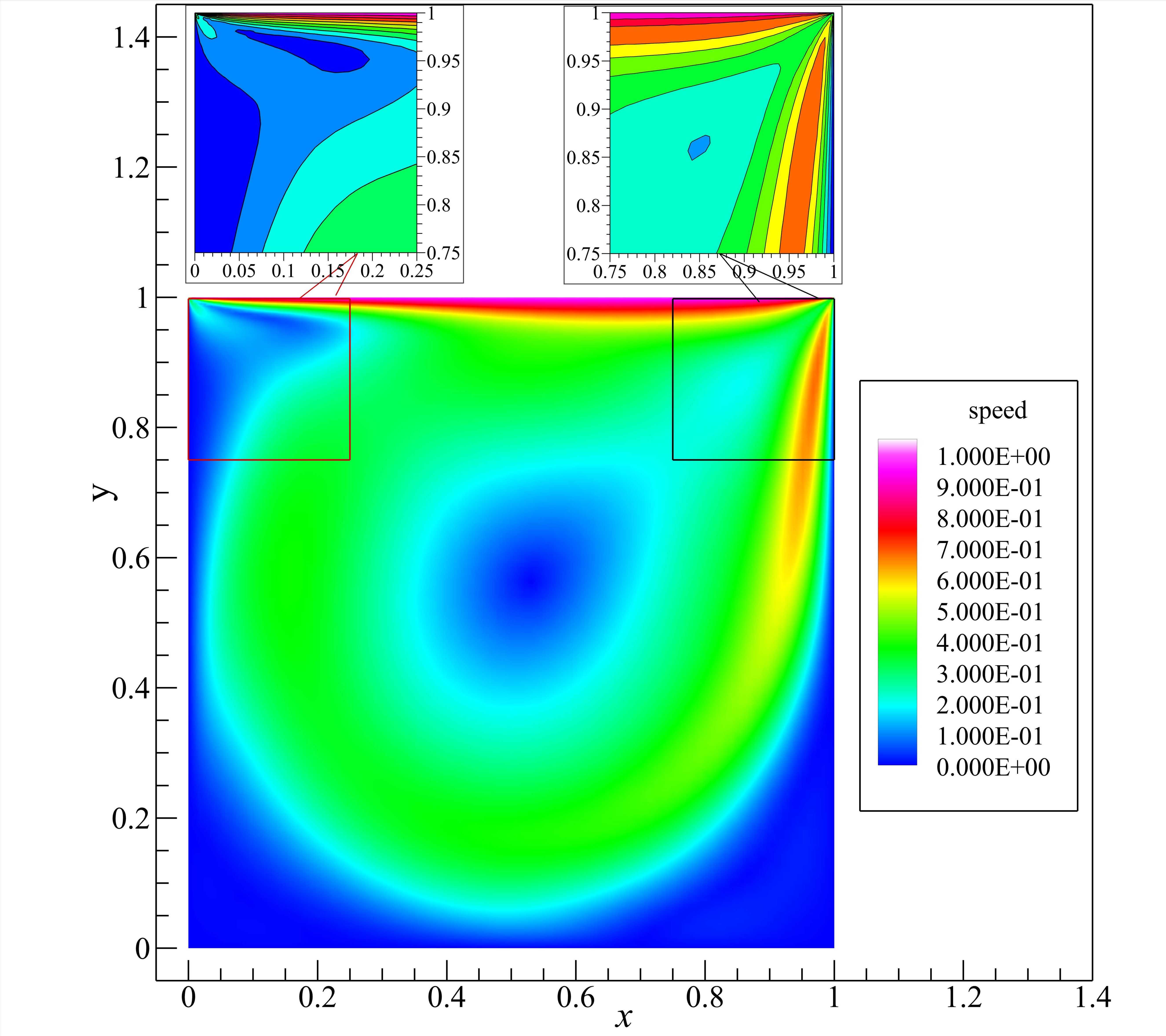}
        \caption{$\textrm{Re}=1,000$}\label{Fig:Speed_Re1000}
    \end{subfigure}\\
    \begin{subfigure}{0.40\textwidth}
    \centering
        \includegraphics[trim={0.0cm 0.0cm 0.0cm 0.0cm}, clip=true,width=1.0\textwidth]{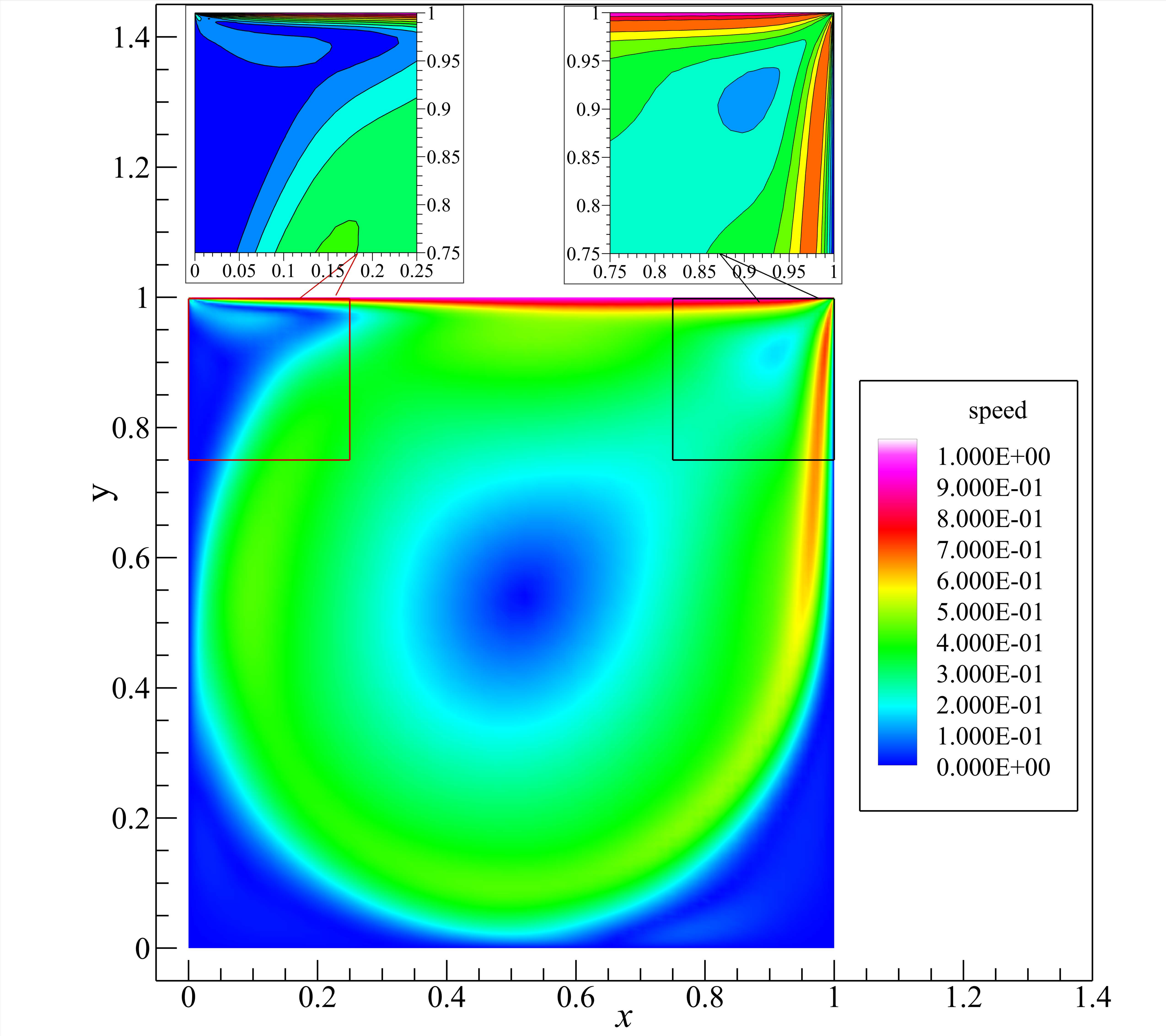}
        \caption{$\textrm{Re}=3,200$}\label{Fig:Speed_Re3200}
    \end{subfigure}%
        \begin{subfigure}{0.40\textwidth}
    \centering
        \includegraphics[trim={0.0cm 0.0cm 0.0cm 0.0cm}, clip=true,width=1.0\textwidth]{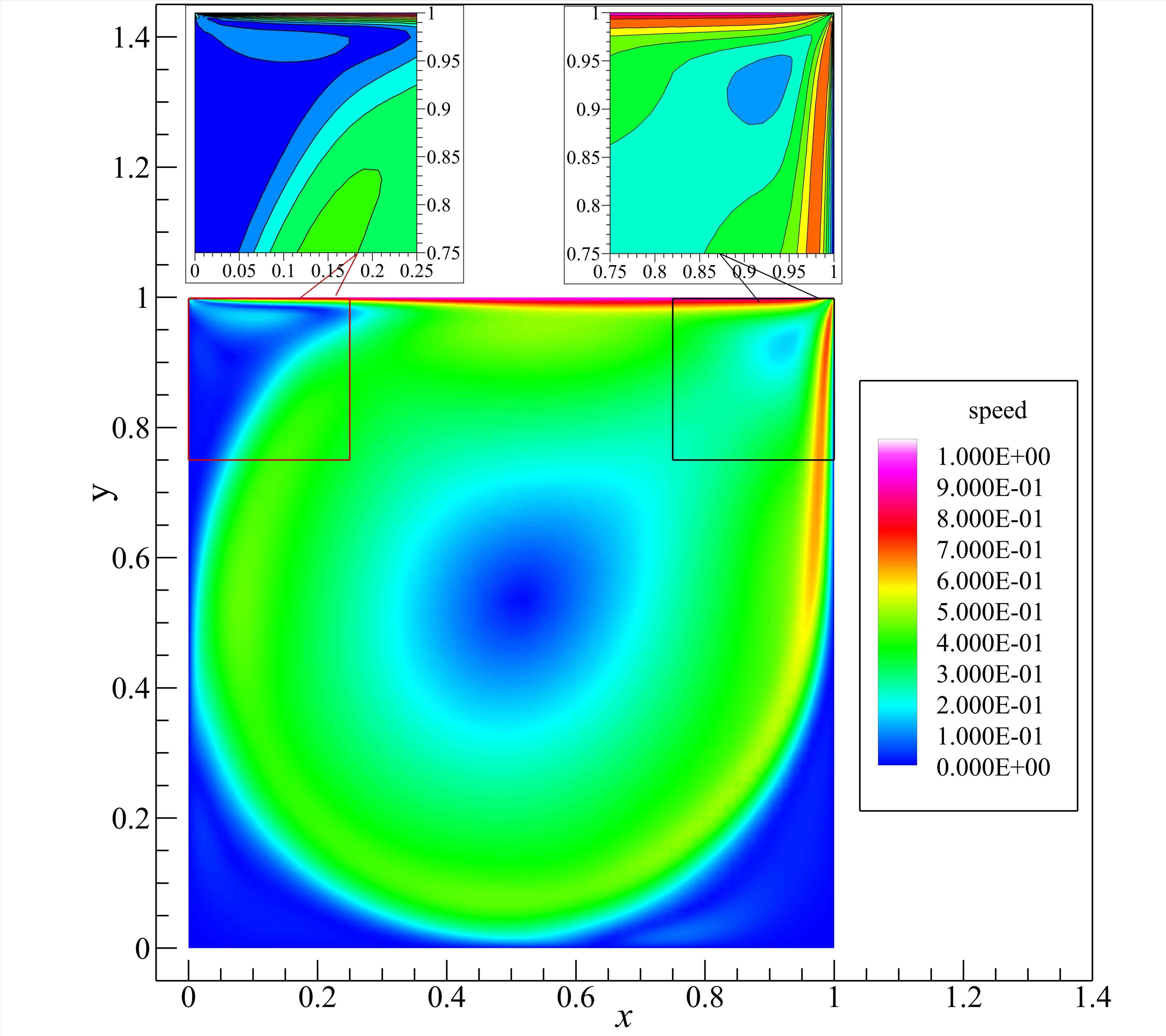}
        \caption{$\textrm{Re}=5,000$}\label{Fig:Speed_Re5000}
    \end{subfigure}\\
         \begin{subfigure}{0.40\textwidth}
    \centering
        \includegraphics[trim={0.0cm 0.0cm 0.0cm 0.0cm}, clip=true,width=1.0\textwidth]{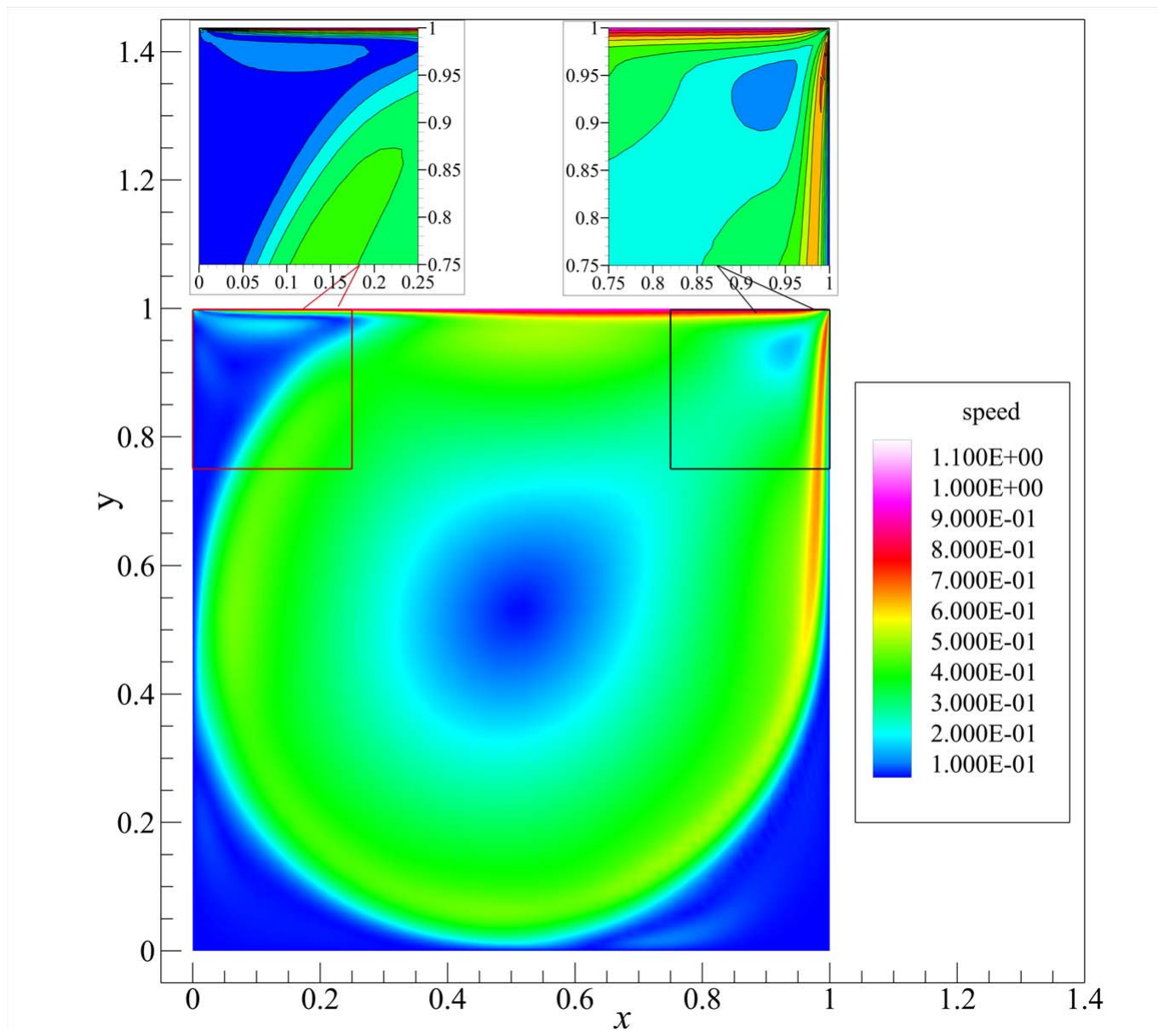}
        \caption{$\textrm{Re}=7,500$}\label{Fig:Speed_Re7500}
    \end{subfigure}%
             \begin{subfigure}{0.40\textwidth}
    \centering
        \includegraphics[trim={0.0cm 0.0cm 0.0cm 0.0cm}, clip=true,width=1.0\textwidth]{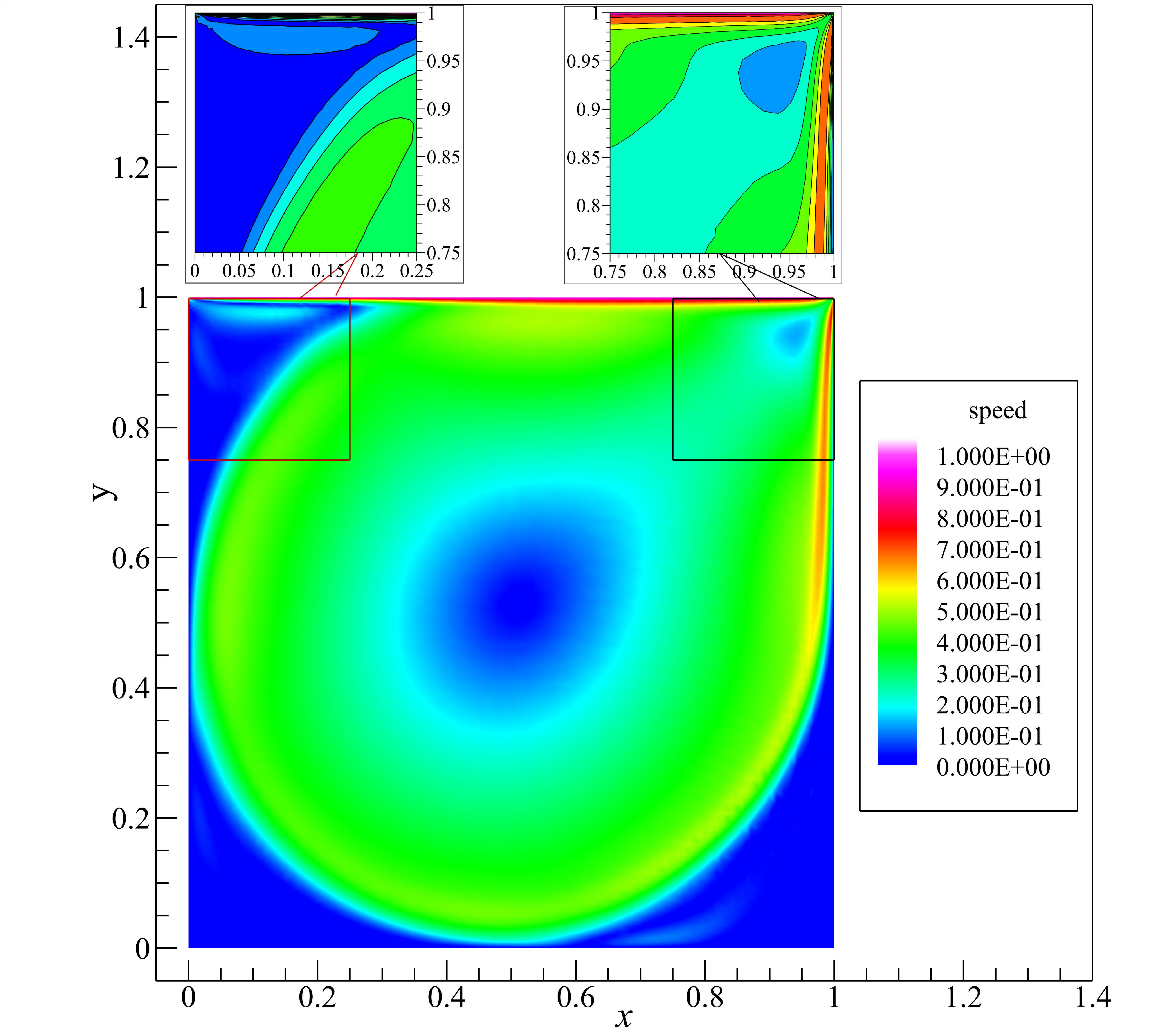}
        \caption{$\textrm{Re}=10,000$}\label{Fig:Speed_Re10000}
    \end{subfigure}
    \caption{Contour plots of speed profiles for different $\textrm{Re}$}
    \label{Figs:Speed_contour_diff_Re}
\end{figure}
\clearpage
\newpage
\noindent
\begin{figure}
\centering
\begin{subfigure}{0.27\textheight}
    \centering
        \includegraphics[trim={0.0cm 0.0cm 0.0cm 0.0cm}, clip=true,width=1.0\textwidth]{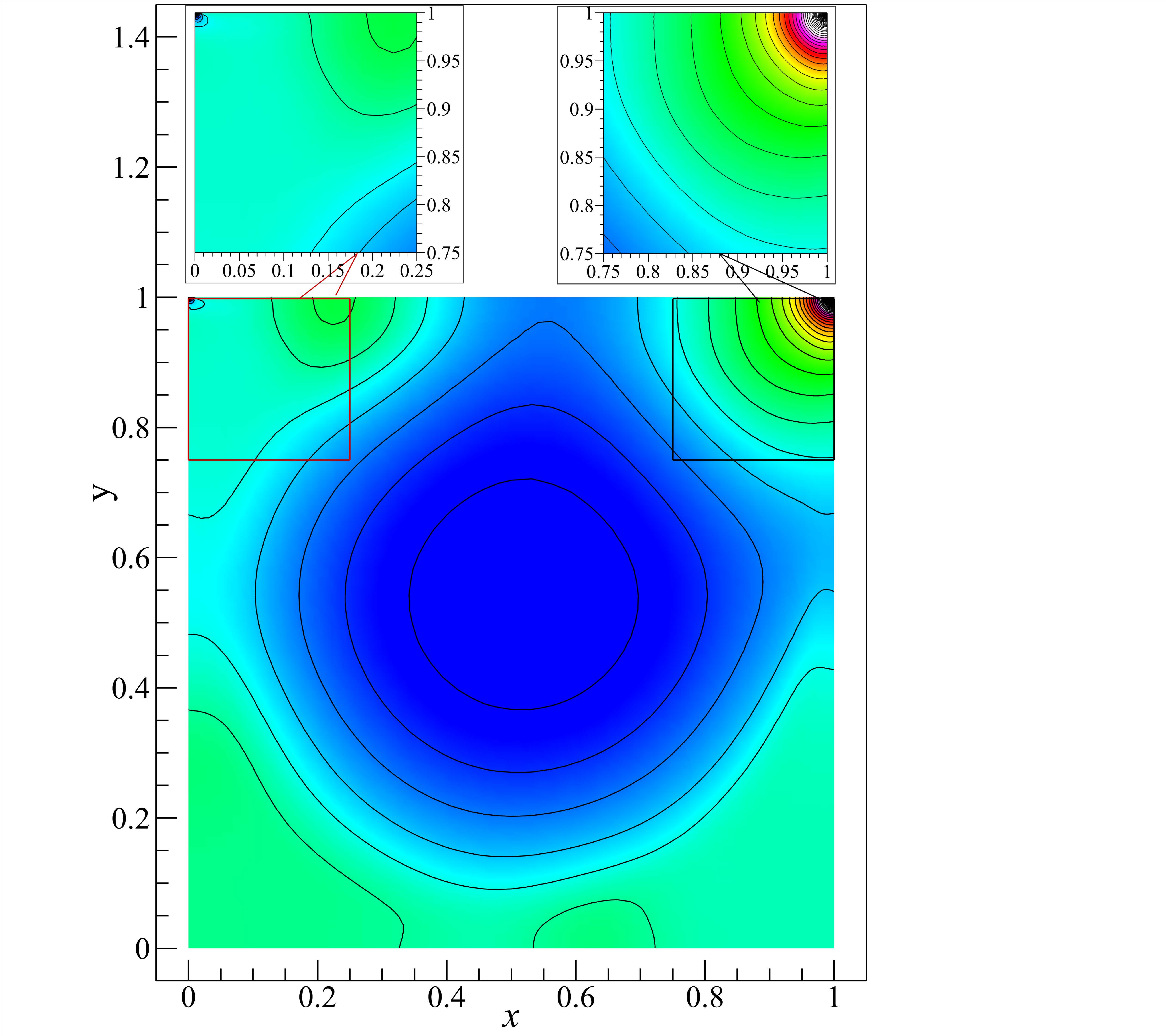} 
        \caption{$\textrm{Re}=3,200$ }\label{Fig:Pressure_Re3200}
\end{subfigure}%
		\begin{subfigure}{0.27\textheight}
    \centering
        \includegraphics[trim={0.0cm 0.0cm 0.0cm 0.0cm}, clip=true,width=1.0\textwidth]{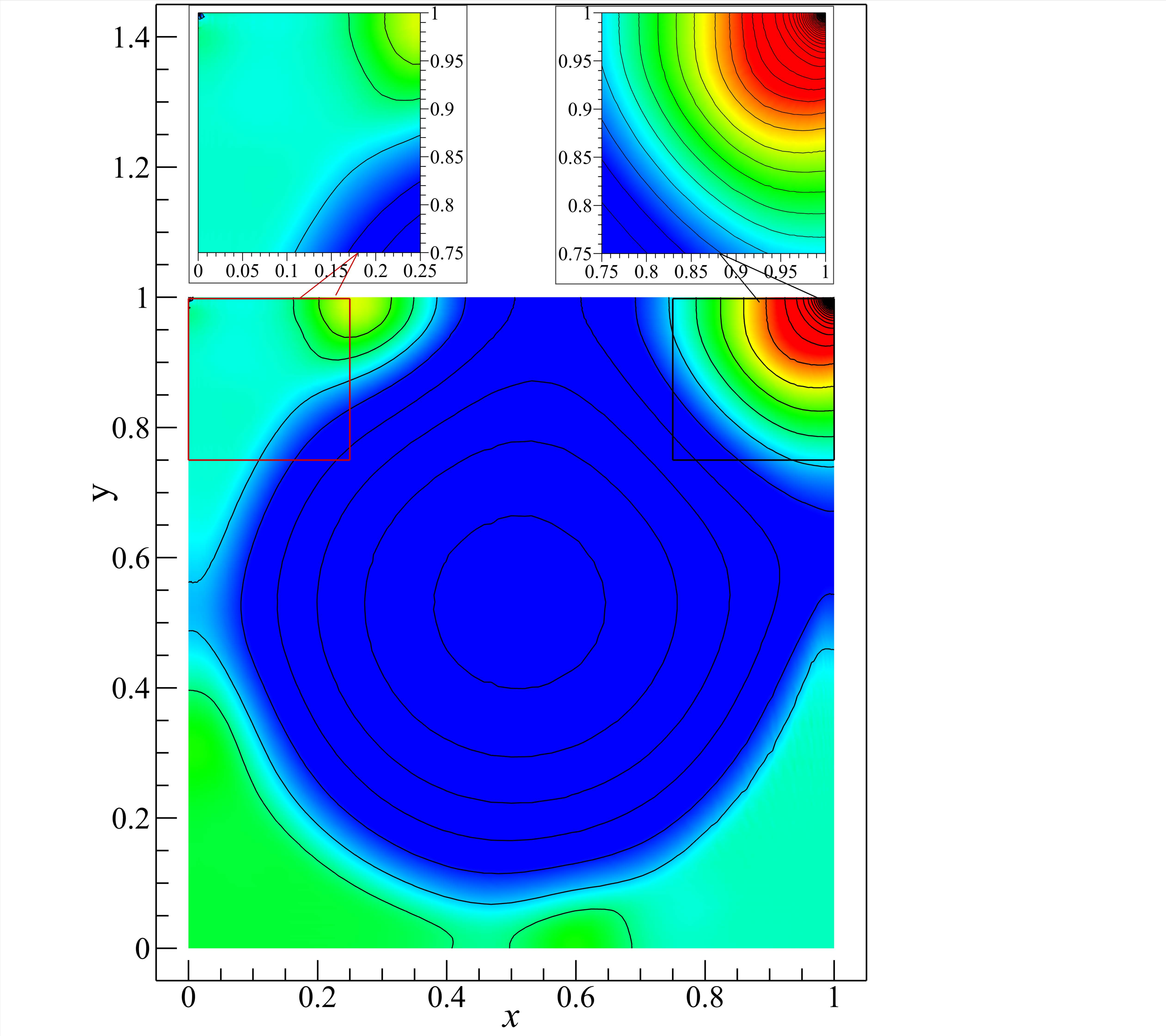}
        \caption{$\textrm{Re}=10,000$}\label{Fig:Pressure_Re1000}
    \end{subfigure}
    \caption{Pressure contours at $\textrm{Re}=100$ and 10,000}\label{Fig:Pressure_Re_3200_Re10000}
\end{figure}

Figure~\ref{Figs:Speed_contour_diff_Re} presents contour plots of the flow speed $= \sqrt{u^2 + v^2}$. The imposition of a unit tangential velocity along the top lid generates a high-speed region near the upper boundary, where the fluid is driven toward the right wall, redirected downward, and recirculates within the cavity. At $\textrm{Re} = 100$, a primary vortex is observed in the flow field. As the Reynolds number increases, this vortex gradually shifts toward the geometric center of the domain. Additionally, a secondary vortex forms near the top-right corner at $\textrm{Re} = 400$, and continues to intensify and migrate closer to the corner with increasing Reynolds number.


To assess the effectiveness of the SAT technique in handling the discontinuous boundary conditions in a stable manner, enlarged views of the speed profiles near the top corners are also provided. These demonstrate that the inclusion of the SAT term eliminates spurious oscillations near the top boundaries, ensuring smooth speed contours. Note that the speed profiles at $\textrm{Re}=7,500$ and $\textrm{Re}=10,000$ are almost identical. Furthermore, the pressure distributions at $\textrm{Re}=3,200$ and 10,000 at Figure~\ref{Fig:Pressure_Re_3200_Re10000} also exhibit smooth profiles without any indication of nonphysical oscillations~\cite{malan2002improved}, further supporting the robustness of the SAT technique. 
\subsection{Backward facing step flow problem}
\label{Sec:BFS_PROBLEM}
As a final example, we study the incompressible viscous fluid flow over a two-dimensional backward-facing step. This benchmark problem has been solved by numerous researchers to study different flow features, e.g. flow separation and reattachment of boundary layers. Here we consider the reference geometry, provided in \cite{gartling1990test,mandal2023weakly}. The schematic geometry and the associated boundary conditions are shown in Figure~\ref{Fig:Step_geom_BC}. The channel upstream portion of the step is excluded and replaced by an inflow boundary condition as per \cite{gartling1990test}. The channel height at the downstream region is equal to unit height $H$ and at the upstream side, it is set to $h=H/2$. The channel has length $L=30H$ and it extends to 60 step heights from the inlet. 
\begin{figure}
\centering
\includegraphics[trim={0.0cm 0.0cm 0.0cm 0.0cm}, clip=true,width=1.0\textwidth]{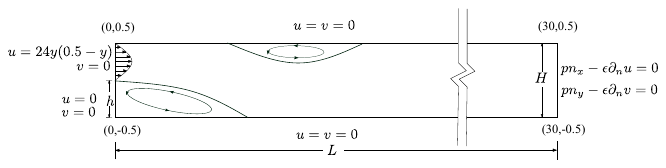}
        \caption{Schematic representation of backward-facing step geometry with boundary conditions}\label{Fig:Step_geom_BC}
\end{figure}

A solid wall boundary condition is prescribed at the top and bottom boundaries. A parabolic horizontal velocity profile $u(y)=24y(0.5-y)$ is imposed at the inflow boundary for $0\leq y\leq 0.5$, whereas for the remaining half of the west boundary we impose solid wall boundary conditions. The parabolic velocity profile produces a maximum inflow velocity $u_{\textrm{max}}=1.5$ and an average velocity of $u_{\textrm{avg}}=1.0$. For the outflow boundary at the east end, we use the natural boundary conditions derived and proven stable in \cite{nordstrom2019energy}.

\begin{figure}
\centering
\includegraphics[trim={0.0cm 0.0cm 0.0cm 0.0cm}, clip=true,width=1.0\textwidth]{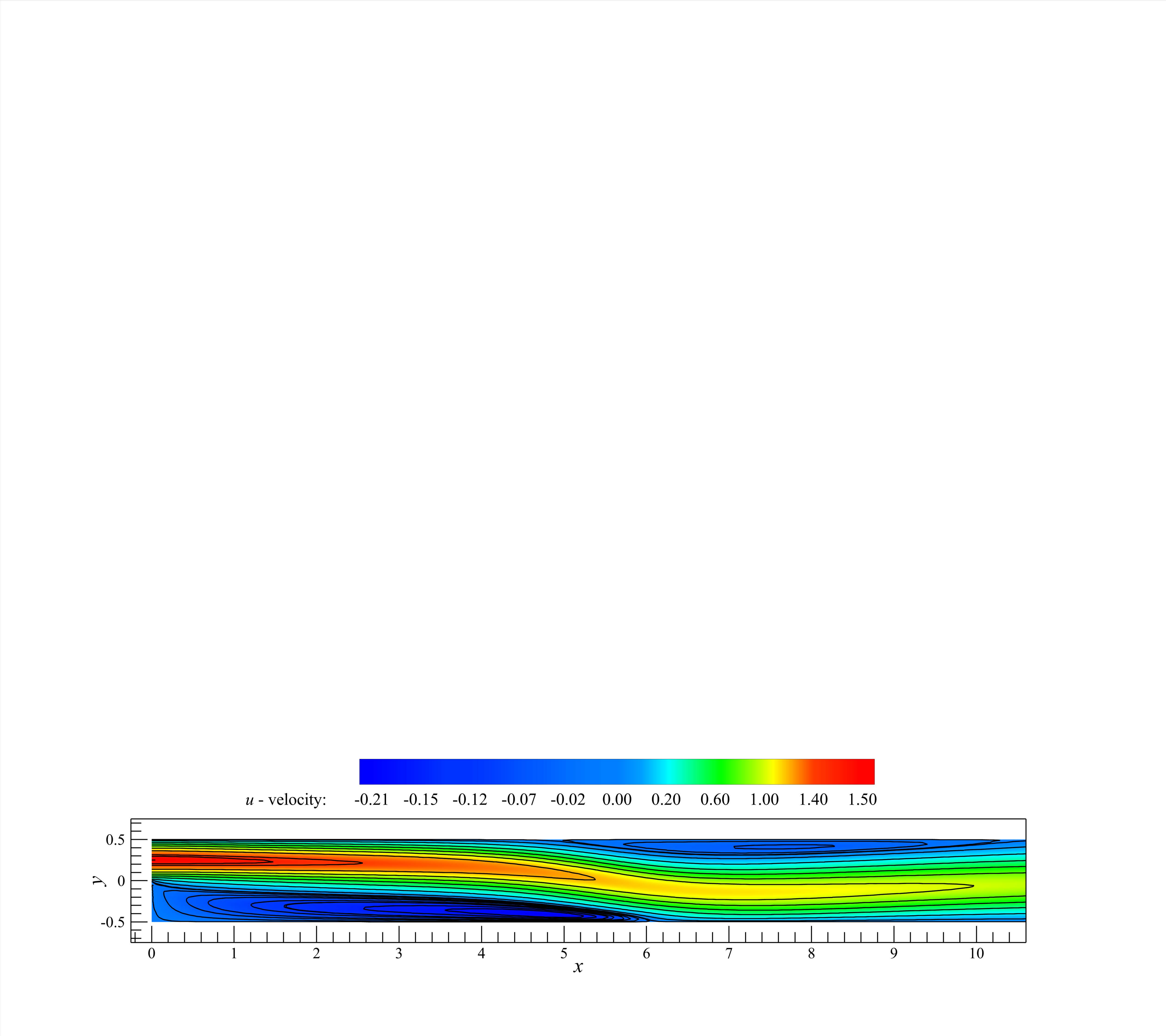} 
        \caption{Contour plots for $u$-velocity at $\textrm{Re}=800$}\label{Fig:Bfs_u_cntr_Re800}
\end{figure}
\begin{figure}
\centering
\includegraphics[trim={0.0cm 0.0cm 0.0cm 0.0cm}, clip=true,width=1.0\textwidth]{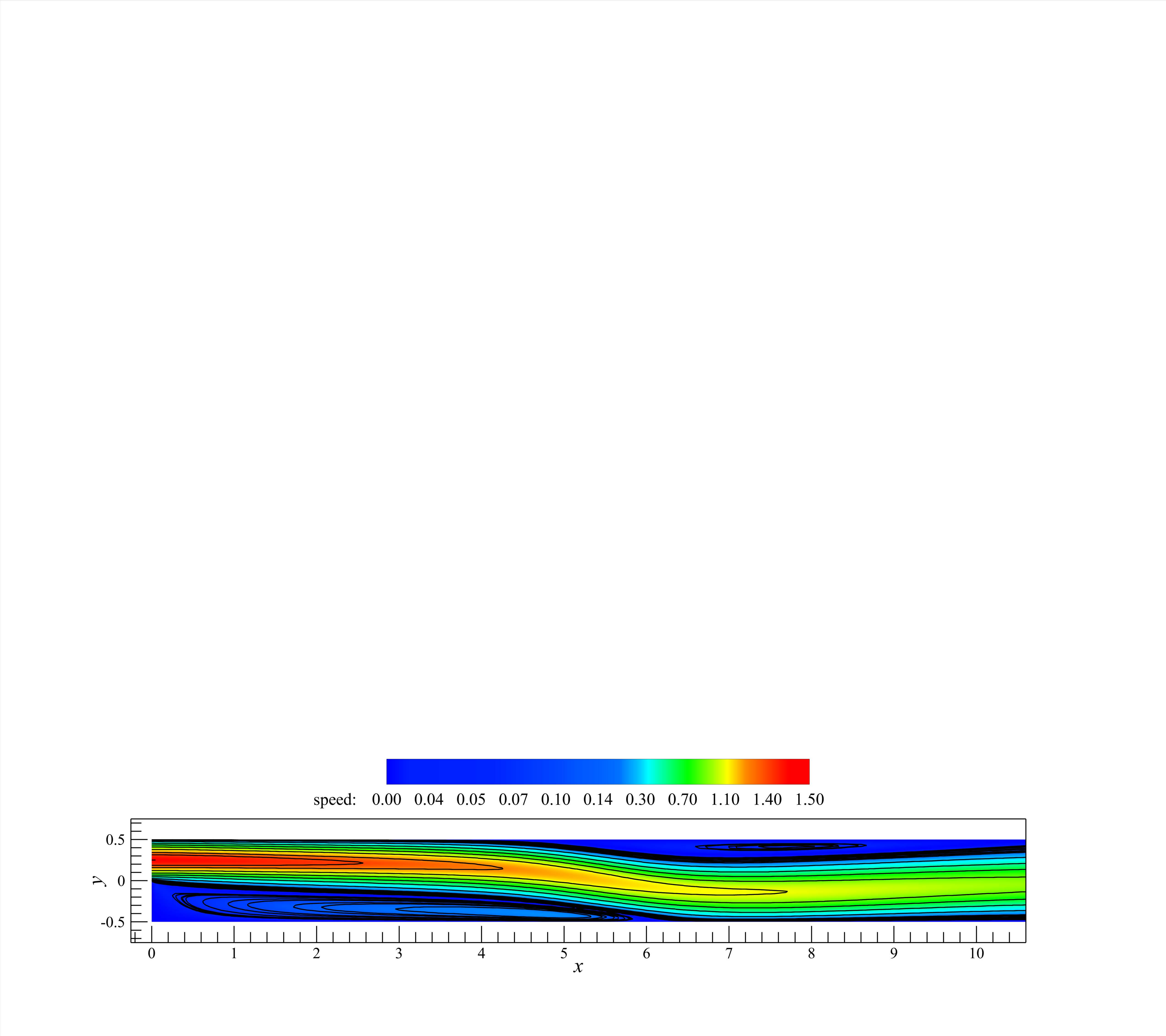}
        \caption{Contour plots for speed at $\textrm{Re}=800$}\label{Fig:Bfs_speed_cntr_Re800}
\end{figure}
\begin{figure}
\centering
\includegraphics[trim={0.0cm 0.0cm 0.0cm 0.0cm}, clip=true,width=1.0\textwidth]{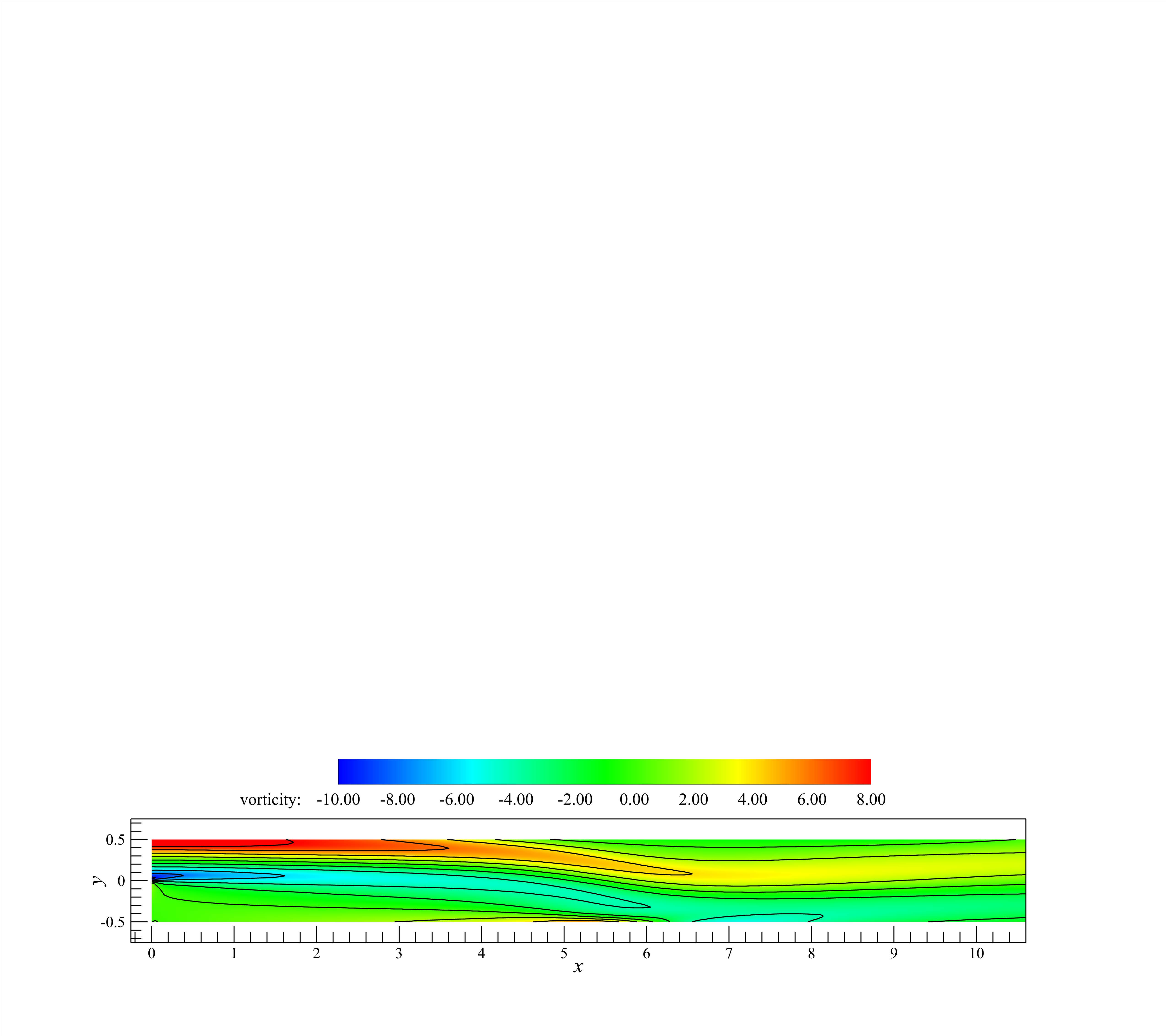}
        \caption{Vorticity contour at $\textrm{Re}=800$}\label{Fig:Bfs_vort_cntr_Re800}
\end{figure}
\begin{figure}
\centering
\includegraphics[trim={0.0cm 0.0cm 0.0cm 0.0cm}, clip=true,width=1.0\textwidth]{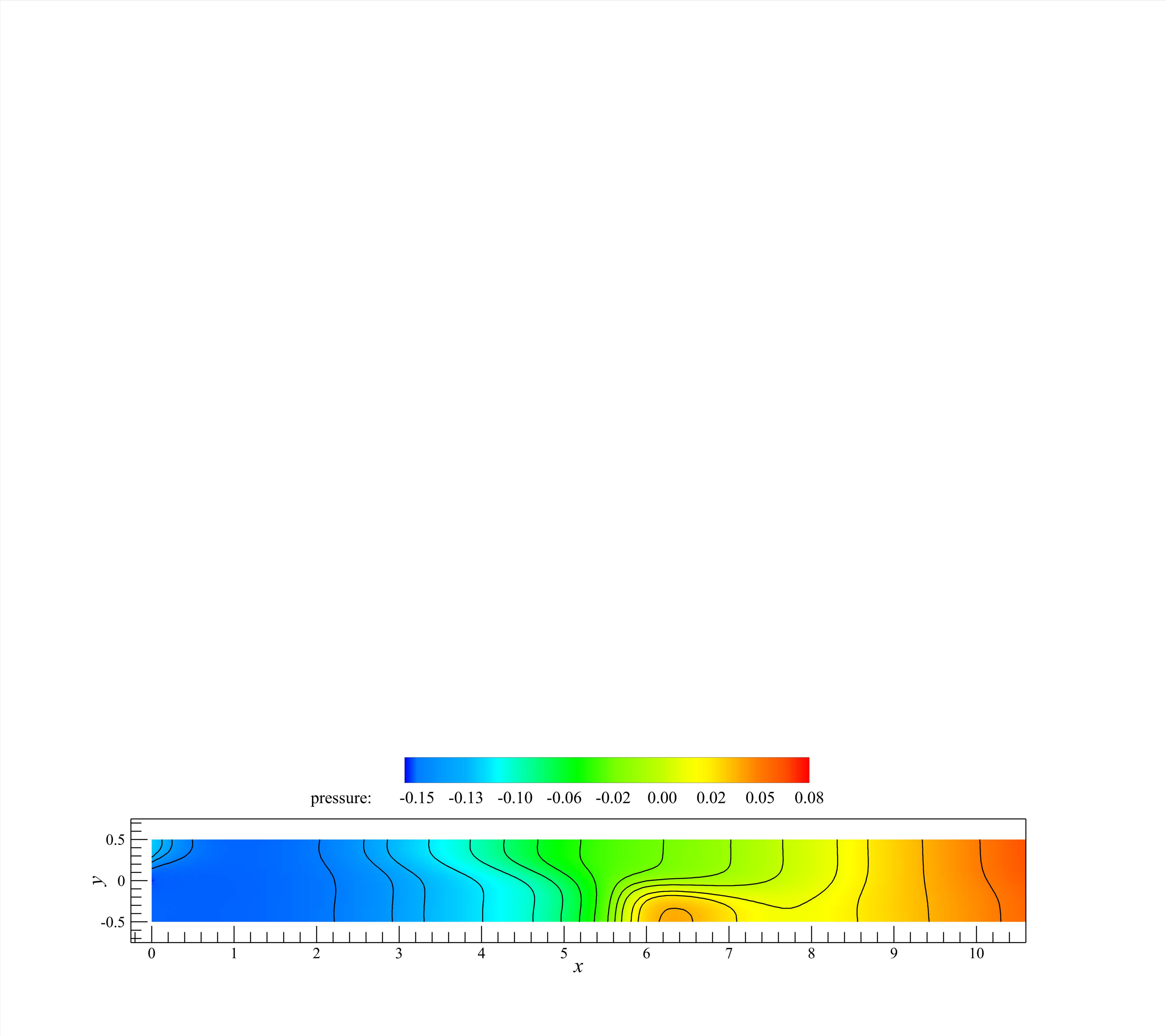}
        \caption{Pressure contour at $\textrm{Re}=800$}\label{Fig:Bfs_p_cntr_Re800}
\end{figure}

We discretize the two-dimensional domain using fourth-order Lagrange polynomials over a uniformly distributed mesh along both the channel length and width consisting of $100 \times 14$ elements, yielding a total of $401 \times 57=22857$ nodal points. We start with zero initial conditions and adopt the same iterative approach outlined previously with a time step size $\Delta t = 0.1$.

To assess the accuracy of our formulation, we compare the numerical results at $\textrm{Re} = 800$ with the reference results in \cite{gartling1990test}. Notably, the reference solution was obtained using a much finer mesh with second-order Lagrange polynomials for the velocity field consisting of 129681 nodes, i.e., almost six times more than we used. The contour plots shown in Figures~\ref{Fig:Bfs_u_cntr_Re800}-\ref{Fig:Bfs_p_cntr_Re800} describe the presence of the recirculation region. The primary and the secondary vortices are clearly visible from the speed and the $u$-velocity contour plots. The primary vortex is formed at the bottom wall of the channel near the step corner, while the secondary vortex is observed near the upper wall. These results are consistent with the results in \cite{gartling1990test,mandal2023weakly}, as are the vorticity $\left(\omega=\partial v/\partial x-\partial u/\partial y\right)$ and pressure profiles.
\begin{figure}
\centering
\begin{subfigure}{0.33\textwidth}
    \centering
        \includegraphics[trim={0.0cm 0.0cm 0.0cm 0.0cm}, clip=true,width=1.0\textwidth]{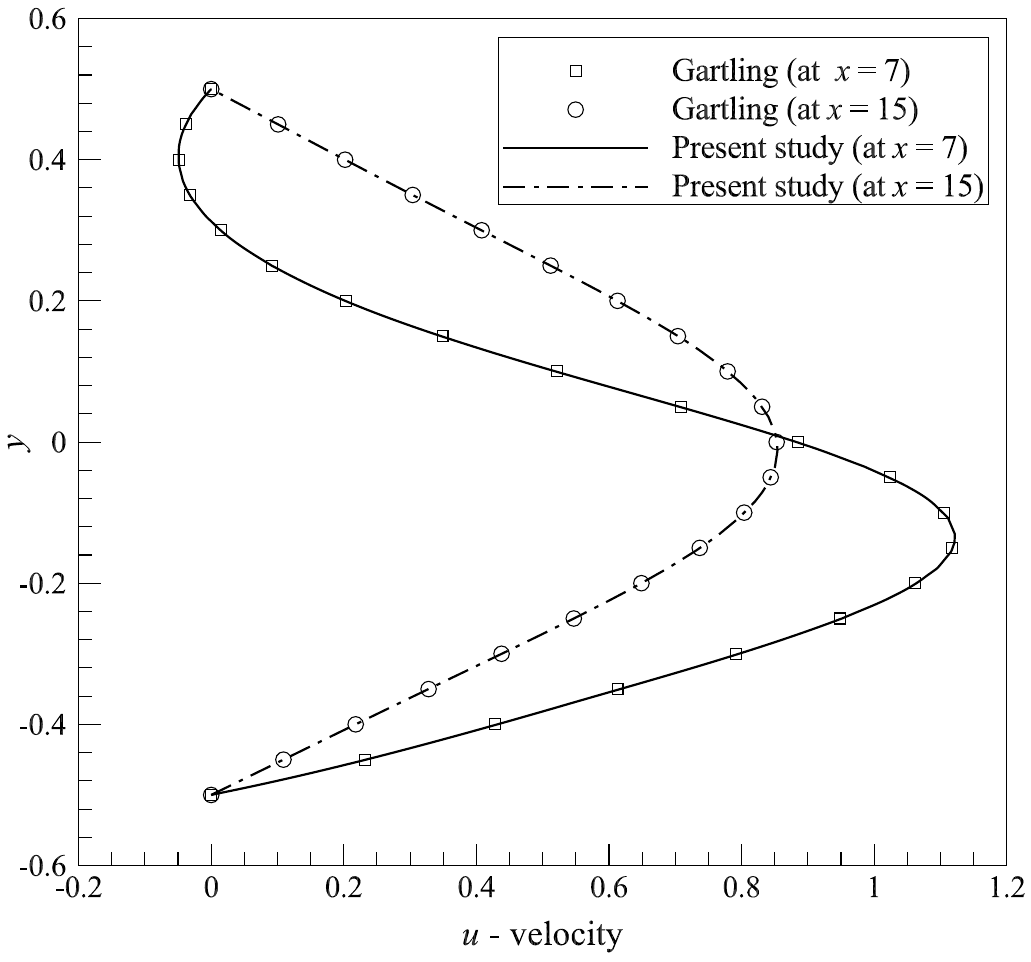}
        \caption{}\label{Fig:y_u_comp_gartling_x7x15_BFS}
    \end{subfigure}%
    \centering
		\begin{subfigure}{0.33\textwidth}
    \centering
        \includegraphics[trim={0.0cm 0.0cm 0.0cm 0.0cm}, clip=true,width=1.0\textwidth]{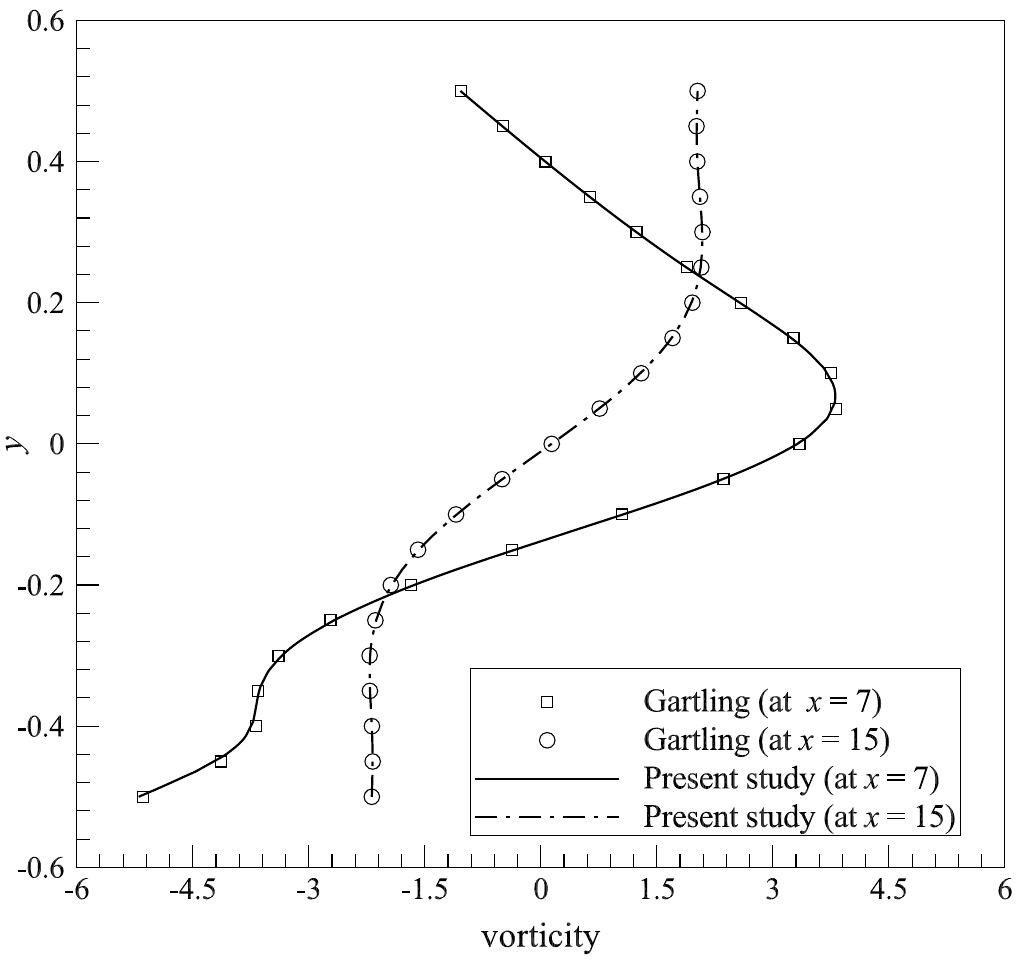}
        \caption{}\label{Fig:y_vort_comp_gartling_x7x15_BFS}
    \end{subfigure}
\centering
		\begin{subfigure}{0.33\textwidth}
    \centering
        \includegraphics[trim={0.0cm 0.0cm 0.0cm 0.0cm}, clip=true,width=1.0\textwidth]{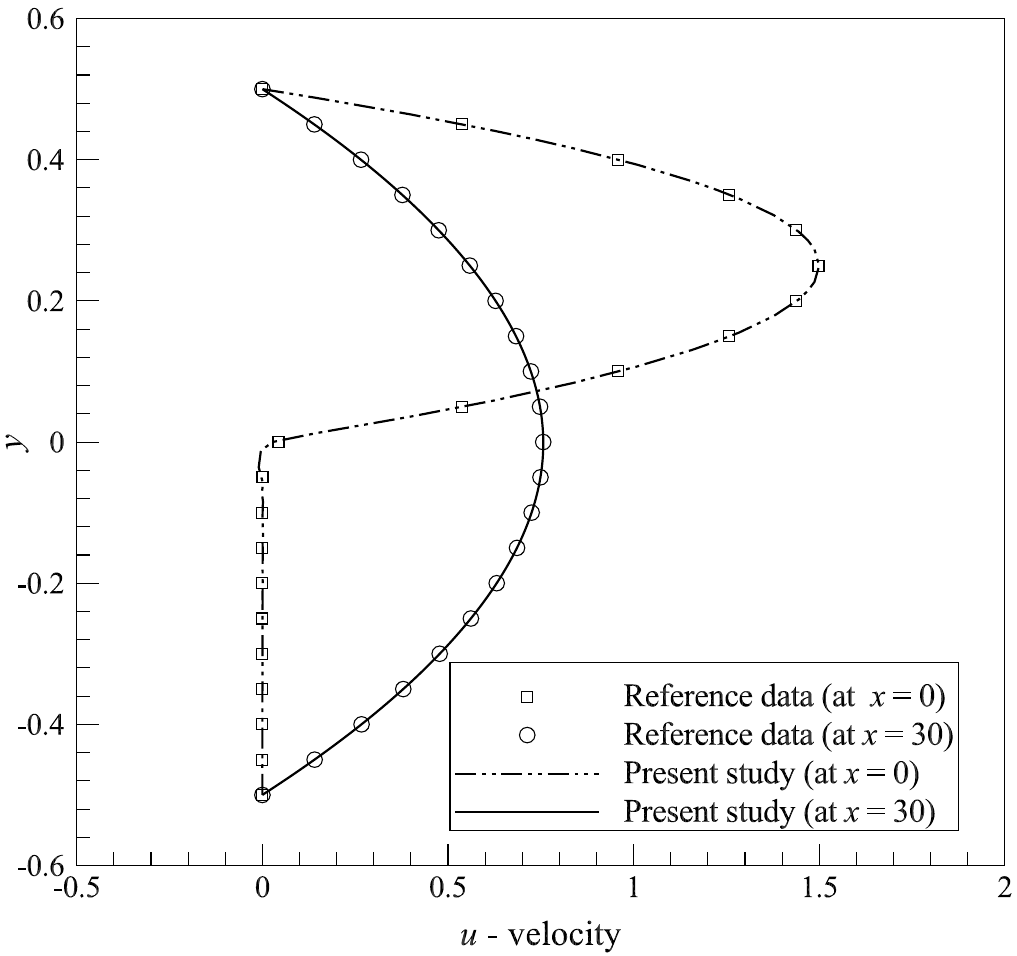} 
        \caption{}\label{Fig:y_u_inlet_outlet} 
    \end{subfigure}%
        \caption{Comparison of (a) $u$-velocity and (b) vorticity profiles at $x=7$ and $x=15$ with \cite{gartling1990test}, and (c) $u$-velocity variation along the inflow and outflow boundaries compared with the reference data \cite{mandal2023weakly} at $\textrm{Re}=800$.}
    \label{Figs:Comp_CrossFlwVariatins_u_vort_Gartling}
\end{figure}

The computed horizontal velocity and the vorticity in the cross flow directions compared to the benchmark data~\cite{gartling1990test} at $x=7$ and at $x=15$ are shown in Figures~\ref{Fig:y_u_comp_gartling_x7x15_BFS} and \ref{Fig:y_vort_comp_gartling_x7x15_BFS}, respectively. Excellent agreement is again achieved despite using the significantly coarser mesh. The variations of the $u$-velocity at the inflow and the outflow boundaries are plotted in Figure~\ref{Fig:y_u_inlet_outlet}. From the Figure~\ref{Fig:y_u_inlet_outlet}, it is clear that the horizontal velocity profile is gradually becoming parabolic, and it becomes fully developed at the outflow boundary where it matches exactly with the results in \cite{mandal2023weakly}.
\section{Summary and Conclusions}
\label{sec:Concluding_remarks}
A high-order CGFEM formulation in the SBP-SAT framework has been developed to solve initial-boundary value problems for the incompressible Navier–Stokes (INS) equations. The formulation is based on primitive variables, with equal-order Lagrange polynomials used for approximating both velocity and pressure fields. An energy analysis has been conducted at both the continuous and discrete levels, proving the stability of the method through the establishment of an energy bound.

Numerical validation has been performed using the MMS, confirming optimal order of convergence. The proposed method successfully handles discontinuous boundary conditions, as demonstrated by the lid-driven cavity flow problem, producing accurate numerical results without spurious oscillations across a wide range of Reynolds numbers from 100 to 10,000. 

To further assess the versatility of the formulation, it was also applied to the backward-facing step flow problem. The simulation accurately captured key flow features, including the formation of primary and secondary counter-rotating vortices at $\textrm{Re}=800$, thereby confirming the robustness and applicability of the proposed method to more general flow configurations. It also verified the efficiency of the new method.
\section*{Acknowledgements}
M.M and A.M. were financially supported by the National Research Foundation (NRF) of South Africa (Grant no. 89916)\footnote{Any opinion, findings and conclusions or recommendations expressed in this material are those of the author(s) and therefore the NRF do not accept any liability with regard thereto.} as well the HASTA project (Grant No. 101138003)\footnote{Views and opinions expressed are however those of the author(s) only and do not necessarily reflect those of the European Union. Neither the European Union nor the granting authority can be held responsible for them.} as part of the European Union Horizon research program. J.N. was supported by  Vetenskapsrådet, Sweden [award no. 2021-05484 VR] and University of Johannesburg Global Excellence
and Stature Initiative Funding.
\bibliographystyle{cas-model2-names}
\bibliography{CG_VS_FD_INS_BibFile}
\end{document}